\documentclass[11pt,letterpaper]{amsart}

\setlength{\pdfpagewidth}{\paperwidth}
\setlength{\pdfpageheight}{\paperheight} 

\usepackage{amssymb}
\usepackage{enumerate}
\usepackage{bbm}

\newtheorem{theorem}{Theorem}[section]
\newtheorem{proposition}[theorem]{Proposition}
\newtheorem{corollary}[theorem]{Corollary}
\newtheorem{lemma}[theorem]{Lemma}

\newcommand{\Z}{\mathbb{Z}}
\newcommand{\F}{\mathbb{F}}
\newcommand{\C}{\mathbb{C}}
\newcommand{\R}{\mathbb{R}}
\newcommand{\N}{\mathbb{N}}

\newcommand{\e}{\epsilon}

\newcommand{\1}{\mathbbm{1}}

\newcommand{\norm}[1]{\lVert#1\rVert}
\newcommand{\abs}[1]{\lvert#1\rvert}
\newcommand{\bigabs}[1]{\big\lvert#1\big\rvert}
\newcommand{\biggabs}[1]{\bigg\lvert#1\bigg\rvert}
\newcommand{\Biggabs}[1]{\Bigg\lvert#1\Bigg\rvert}

\newcommand{\sums}[1]{\sum_{\substack{#1}}}

\DeclareMathOperator{\Tr}{Tr}

\begin{document}

\title[Merit factors of polynomials derived from difference sets]{Merit factors of polynomials derived\\from difference sets}

\author{Christian G\"unther}
\address{Department of Mathematics, Paderborn University, Warburger Str.\ 100, 33098 Paderborn, Germany.}
\email[Ch. G\"unther]{chriguen@math.upb.de}

\author{Kai-Uwe Schmidt}
\address{Department of Mathematics, Paderborn University, Warburger Str.\ 100, 33098 Paderborn, Germany.}
\email[K.-U. Schmidt]{kus@math.upb.de}

\thanks{The authors are supported by German Research Foundation (DFG)}

\date{20 April 2015 (revised 11 February 2016)}


\begin{abstract}
The problem of constructing polynomials with all coefficients $1$ or $-1$ and large merit factor (equivalently with small $L^4$ norm on the unit circle) arises naturally in complex analysis, condensed matter physics, and digital communications engineering. Most known constructions arise (sometimes in a subtle way) from difference sets, in particular from Paley and Singer difference sets. We consider the asymptotic merit factor of polynomials constructed from other difference sets, providing the first essentially new examples since 1991. In particular we prove a general theorem on the asymptotic merit factor of polynomials arising from cyclotomy, which includes results on Hall and Paley difference sets as special cases. In addition, we establish the asymptotic merit factor of polynomials derived from Gordon-Mills-Welch difference sets and Sidelnikov almost difference sets, proving two recent conjectures.
\end{abstract}

\maketitle

\thispagestyle{empty}

\enlargethispage{1ex}

\vspace{-1.5ex}


\section{Introduction}

The problem of constructing polynomials having all coefficients in the set $\{-1,1\}$ (frequently called \emph{Littlewood polynomials}) with small $L^\alpha$ norm on the complex unit circle arises naturally in complex analysis~\cite{Lit1966},~\cite{Lit1968},~\cite{Bor2002},~\cite{Erd2002}, condensed matter physics~\cite{Ber1987}, and the design of sequences for communications devices~\cite{Gol1972},~\cite{BeeClaHer1985}.
\par
Recall that, for $1\le\alpha<\infty$, the $L^\alpha$ norm on the unit circle of a polynomial $f\in\C[z]$ is
\[
\norm{f}_\alpha=\left(\frac{1}{2\pi} \int_0^{2 \pi} \bigabs{f(e^{i\phi})}^\alpha \,d\phi\right)^{1/\alpha}.
\]
The $L^4$ norm has received particular attention because it is easier to calculate than most other $L^\alpha$ norms. Specifically, the $L^4$ norm of $f\in\C[z]$ is exactly the sum of the squared magnitudes of the coefficients of $f(z)\overline{f(z^{-1})}$. It is customary (see~\cite{Bor2002}, for example) to measure the smallness of the $L^4$ norm of a polynomial $f$ by its \emph{merit factor} $F(f)$, defined by
\[
F(f)=\frac{\norm{f}_2^4}{\norm{f}_4^4-\norm{f}_2^4},
\]
provided that the denominator is nonzero. Note that, if $f$ is a Littlewood polynomial of degree $n-1$, then $\norm{f}_2=\sqrt{n}$ and so a large merit factor means that the $L^4$ norm is small. 
\par
We note that Golay's original equivalent definition~\cite{Gol1972} of the merit factor involves the aperiodic autocorrelations (which are precisely the coefficients of $f(z)\overline{f(z^{-1})}$) of the sequence formed by the coefficients of~$f$.
\par
Besides continuous progress on the merit factor problem in the last fifty years (see~\cite{Jed2005},~\cite{Hoh2006},~\cite{BorFerKna2008} for surveys and~\cite{JedKatSch2013} for a brief review of more recent work), modulo generalisations and variations, only three nontrivial families of Littlewood polynomials are known, for which we can compute the asymptotic merit factor. These are: Rudin-Shapiro polynomials and polynomials whose coefficients are derived either from multiplicative or additive characters of finite fields. As shown by Littlewood~\cite{Lit1968}, Rudin-Shapiro polynomials are constructed recursively in such a way that their merit factor satisfies a simple recurrence, which gives an asymptotic value of $3$. The largest asymptotic merit factors that have been obtained from the other two families equal cubic algebraic numbers $6.342061\dots$ and $3.342065\dots$, respectively~\cite{JedKatSch2013a},~\cite{JedKatSch2013} (see Corollary~\ref{cor:second_order} and Theorem~\ref{thm:gmw} in this paper 
for precise statements).
\par
The latter polynomials are closely related to classical difference sets, namely Paley and Singer difference sets. Recall that a \emph{difference set} with parameters $(n,k,\lambda)$ is a $k$-subset $D$ of a finite group $G$ of order $n$ such that the $k(k-1)$ nonzero differences of elements in $D$ hit every nonzero element of~$G$ exactly $\lambda$ times (so that $k(k-1)=\lambda(n-1)$). We are interested in the case that the group $G$ is cyclic. In this case, we fix a generator $\theta$ of $G$ and associate with a subset $D$ of $G$ the Littlewood polynomial
\begin{equation}
f_{r,t}(z)=\sum_{j=0}^{t-1}\1_D(\theta^{j+r})z^j,   \label{eqn:set_to_poly}
\end{equation}
where $r$ and $t$ are integers with $t\ge 0$ and
\[
\1_D(y)=\begin{cases}
 1 & \text{for $y\in D$}\\
-1 & \text{for $y\in G\setminus D$}.
\end{cases}
\]
If $n$ is the group order, then $f_{0,n}$ captures the information about~$D$. We call this polynomial a \emph{characteristic polynomial} of~$D$ (which is unique up to the choice of $\theta$) and the polynomials $f_{r,n}$ \emph{shifted} characteristic polynomials.
\par
The results of this paper are mainly motivated by the 1991 paper of Jensen, Jensen, and H{\o}holdt~\cite{JenJenHoh1991}, in which the authors asked for the merit factor of polynomials derived from families of difference sets. It was shown in~\cite{JenJenHoh1991} that, among the known families of difference sets, only those with Hadamard parameters, namely
\[
(4h-1,\,2h-1,\,h-1)
\]
for a positive integer $h$, can give a nonzero asymptotic merit factor for their shifted characteristic polynomials. As of 1991, five families of such difference sets were known~\cite{JenJenHoh1991}:
\begin{enumerate}[(A)]
\item Paley difference sets,
\item Singer difference sets,
\item Twin-prime difference sets,
\item Gordon-Mills-Welch difference sets,
\item Hall difference sets.
\end{enumerate}
While in the first three cases, the asymptotic merit factors of the shifted characteristic polynomials have been determined in~\cite{HohJen1988} and~\cite{JenJenHoh1991} and those of the more general polynomials~\eqref{eqn:set_to_poly} in~\cite{JedKatSch2013a} and~\cite{JedKatSch2013}, the last two cases were left as open problems in~\cite{JenJenHoh1991}. More specifically, the asymptotic merit factor of the polynomials derived from a subclass of the Gordon-Mills-Welch difference sets is subject to a conjecture~\cite[Conjecture~7.1]{JedKatSch2013}. In this paper, we prove this conjecture and solve the problems concerning Gordon-Mills-Welch and Hall difference sets posed in~\cite{JenJenHoh1991} (see Theorem~\ref{thm:gmw} and Corollary~\ref{cor:sixth_order}, respectively). In addition, we obtain the asymptotic merit factor of polynomials related to a construction of Sidelnikov~\cite{Sid1969} (see also~\cite{LemCohEas1977}). This explains numerical observations in~\cite{HarYaz2010} and proves in the 
affirmative~\cite[Conjecture~7.2]{JedKatSch2013}.
\par
In fact, the result for Hall difference sets arises from a much more general theorem concerning polynomials derived from cyclotomy (see Theorem~\ref{thm:mf_cyclotomic}). This result considers polynomials constructed from subsets of~$\F_p$ obtained by joining $m/2$ of the $m$ cyclotomic classes of (even) order $m$, where $m$ satisfies $p\equiv 1\pmod m$. The cases $m\in\{2,4,6\}$ are examined in detail. For $m=2$, we obtain the asymptotic merit factor of polynomials arising from Paley difference sets (see Corollary~\ref{cor:second_order}), which is the main result of~\cite{JedKatSch2013a}. For $m=4$, we obtain the asymptotic merit factor of polynomials arising from Ding-Helleseth-Lam almost difference sets~\cite{DinHelLam1999} (see Corollary~\ref{cor:fourth_order}). For $m=6$, we obtain, among other things, the asymptotic merit factor of polynomials arising from Hall difference sets (see Corollary~\ref{cor:sixth_order}).
\par
Some comments on our result for Gordon-Mills-Welch difference sets follow. In the cyclic case, such sets have paramaters
\[
(2^m-1,2^{m-1}-1,2^{m-2}-1),
\]
which are typically called \emph{Singer} parameters. The Gordon-Mills-Welch construction produces difference sets in a cyclic group $G$ of order $2^m-1$ from difference sets with Singer parameters in a subgroup of $G$. Hence this construction is very general and can in particular be iterated. Our result on Gordon-Mills-Welch difference sets (Theorem~\ref{thm:gmw}) requires no knowledge about the smaller difference sets that are used as building blocks. Thus Singer, Paley, or Hall difference sets (in groups whose order is a Mersenne number) can be used as building blocks. In addition, since 1991, further families of difference sets in cyclic groups with Singer parameters have been found:
\begin{enumerate}[(A)]
\setcounter{enumi}{5}
\item Maschietti difference sets~\cite{Mas1998},
\item Dillon-Dobbertin difference sets~\cite{DilDob2004},
\item No-Chung-Yun difference sets~\cite{DilDob2004}.
\end{enumerate}
Our results include the cases when these difference sets are used as building blocks in the Gordon-Mills-Welch construction. However we have not been able to determine the asymptotic merit factors of the polynomials associated with these difference sets themselves. We conjecture that they have the same behaviour as those of Singer and Gordon-Mills-Welch difference sets, given in Theorem~\ref{thm:gmw}.


\section{Results}
\label{sec:results}

To state our results, we require the function $\varphi_\nu:\R\times\R^{+}\to\R$, defined for real $\nu$ by
\begin{align*}
\frac{1}{\varphi_\nu(R,T)}=
1-\frac{2(1+\nu)T}{3}+4&\sum_{m\in\N}\max\bigg(0,1-\frac{m}{T}\bigg)^2\\
+\nu &\sum_{m\in\Z}\max\bigg(0,1-\biggabs{1+\frac{2R-m}{T}}\bigg)^2,
\end{align*}
where $\N$ is the set of positive integers. This function satisfies $\varphi_\nu(R,T)=\varphi_\nu(R+\tfrac{1}{2},T)$ on its entire domain. It will be useful to know the global maximum of $\varphi_\nu$ for certain values of $\nu$. The function $\varphi_1$ was maximised in~\cite[Corollary 3.2]{JedKatSch2013}. Using the same approach, we find that, for all $\nu\in[0,1]$, the global maximum of $\varphi_\nu(R,T)$ exists and equals the largest root of
\begin{multline*}
(\nu^4-2\nu^3-3\nu^2-50\nu+112)X^3+(12\nu^3+36\nu^2-18\nu-528)X^2\\ +(24\nu^2+282\nu+528)X-6\nu-48.
\end{multline*}
The global maximum is unique for $R\in[0,\tfrac{1}{2})$ and is attained when $T$ is the middle root of
\[
(2\nu+2)X^3-(6\nu+24)X+3\nu+24
\]
and $R=3/4-T/2$.
\par
We begin with stating our results for Gordon-Mills-Welch difference sets \cite{GorMilWel1962} whose ambient group is $\F_q^*$, where $q>2$ is a power of two\footnote{We note that~\cite{GorMilWel1962} defines more general difference sets, which are also called Gordon-Mills-Welch difference sets. However, the sets considered in this paper are the only ones with Hadamard parameters.}. Let~$\F_s$ be a proper subfield of $\F_q$ and let $A$ contain all elements $a\in\F_q$ with $\Tr_{q,s}(a)=1$, where $\Tr_{u,v}$ is the trace from $\F_u$ to $\F_v$. Let $B$ be a difference set in $\F_s^*$ with $\abs{B}=s/2$ (so that, for $s>2$, the complement of $B$ in $\F_s^*$ has Singer parameters). We also allow $s=2$, so that $B$ is a trivial difference set. A set of the form
\begin{equation}
\{ab:a\in A,b\in B\}   \label{eqn:gmw_construction}
\end{equation}
is a \emph{Gordon-Mills-Welch difference set} in $\F_q^*$ (whose complement has Singer parameters). They generalise the Singer difference sets, which arise for $s=2$. We have the following result for the asymptotic merit factor of polynomials obtained by Gordon-Mills-Welch difference sets.
\begin{theorem}
\label{thm:gmw}
Let $q>2$ be a power of two and let $f$ be a characteristic polynomial of a Gordon-Mills-Welch difference set in $\F_q^*$. Let $T>0$ be real. If $t/q\to T$, then $F(f_{r,t})\to \varphi_0(0,T)$ as $q\to\infty$.
\end{theorem}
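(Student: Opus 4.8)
The plan is to rewrite the merit factor in terms of the aperiodic autocorrelations and then exploit the additive-character structure of the Gordon--Mills--Welch indicator. Writing $g=\1_D$ and using that $A=\{a\in\F_q:\Tr_{q,s}(a)=1\}$ together with $B\subseteq\F_s^*$, one checks that $D=\{x\in\F_q^*:\Tr_{q,s}(x)\in B\}$, so $g(x)=\psi(\Tr_{q,s}(x))$, where $\psi\colon\F_s\to\{-1,1\}$ is the $\pm1$-indicator of $B$ (with $\psi(0)=-1$). Since
\[
F(f_{r,t})=\frac{t^2}{2\sum_{u=1}^{t-1}C_u^2},\qquad C_u=\sum_{j=0}^{t-1-u}g(\theta^{r+j})\,g(\theta^{r+j+u}),
\]
and $t\sim Tq$, the theorem is equivalent to the statement that $\frac{1}{q^2}\sum_u C_u^2\to\frac{T^2}{2\varphi_0(0,T)}$.

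First I would expand $\psi$ into the additive characters of $\F_s$. By transitivity of the trace, $y\mapsto(-1)^{\Tr_{s,2}(cy)}$ pulls back to the additive character $x\mapsto(-1)^{\Tr_{q,2}(cx)}=\chi(cx)$ of $\F_q$, so
\[
g(x)=\sum_{c\in\F_s}\hat\psi(c)\,\chi(cx),\qquad \hat\psi(c)=\frac1s\sum_{y\in\F_s}\psi(y)\,(-1)^{\Tr_{s,2}(cy)}.
\]
The decisive point is that $\abs{B}=s/2$ makes $\psi$ balanced, whence $\hat\psi(0)=0$ and the constant term disappears. Thus $f_{r,t}=\sum_{c\in\F_s^*}\hat\psi(c)f_c$, where each $f_c(z)=\sum_{j=0}^{t-1}\chi(c\theta^{r+j})z^j$ is a Littlewood polynomial built from the single additive character $\chi$, twisted by $c$ --- precisely the type of polynomial arising from Singer difference sets (the case $s=2$). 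Substituting this into $\norm{f_{r,t}}_4^4$ reduces the computation to a weighted sum, over the twists, of four-fold correlations of the $f_c$.

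The heart of the reduction is the identity
\[
\frac1s\sum_{y\in\F_s}\psi(y)\,\psi(\rho y)=\1[\rho=1]\qquad(\rho\in\F_s^*),
\]
which encodes the difference-set property of $B$: for $\rho\ne1$ the inner sum equals the periodic autocorrelation of $\psi$ over $\F_s^*$, which is $-1$ because the complement of $B$ has Singer parameters, and the term $y=0$ contributes $\psi(0)^2=1$, cancelling it. This says that $\psi$ has \emph{trivial multiplicative autocorrelation}, so the interference between distinct twists collapses and the autocorrelation energy of $f_{r,t}$ reduces, up to lower-order terms, to that of a single twisted character weighted by $\sum_{c\in\F_s^*}\abs{\hat\psi(c)}^2=\frac1s\sum_y\psi(y)^2=1$. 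Because this weight and the reduction are independent of $s$ and of the building block $B$, the limit coincides with that of the Singer difference sets and equals $\varphi_0(0,T)$. The same structure explains the shape of the answer: since $g$ is assembled purely from additive characters, no multiplicative Gauss-sum term occurs, which is why the limit carries the index $\nu=0$ and is independent of the shift parameter $R$ (in contrast to the Paley case, where the Gauss sum produces the term weighted by $\nu$ and the dependence on $R$).

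I expect the main obstacle to be turning this algebraic reduction into the precise asymptotic evaluation of the windowed correlations. Each such correlation is an incomplete sum of $\chi$ along a geometric progression $\xi\theta^{j}$; partial summation over the window converts the overlap geometry of the shifted index intervals into the triangle terms $\max(0,1-m/T)^2$ and the factor $1-\tfrac{2T}{3}$ appearing in $1/\varphi_0(0,T)$. Two points require care: bounding the non-resonant incomplete character sums so that, even though there are $O(q^2)$ of them, their total contribution is $o(q^2)$; and controlling the resonances created by the subgroup $\F_s^*\subseteq\F_q^*$ (shifts that are multiples of $(q-1)/(s-1)$), showing by means of the trivial-autocorrelation identity and Parseval that their aggregate is $B$- and $s$-independent. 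Establishing these estimates uniformly as $q\to\infty$ over all admissible subfields $\F_s$, so that the single function $\varphi_0(0,T)$ emerges regardless of the choice of $B$, is the crux of the argument; the remaining boundary and error terms should be routine to bound.
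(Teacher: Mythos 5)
Your algebraic setup is correct and genuinely different from the paper's: the identification $D=\{x\in\F_q^*:\Tr_{q,s}(x)\in B\}$, the additive-character expansion $g=\sum_{c\in\F_s^*}\hat\psi(c)\chi(c\,\cdot)$ with $\hat\psi(0)=0$, and the identity $\sum_{c}\hat\psi(c\rho)\hat\psi(c)=\1[\rho=1]$ (which encodes the difference-set property of $B$, equivalently $\abs{\chi^*(B)}^2=s/4$) are all valid, and the last of these does annihilate the resonant pairs with $c+c'\theta^u=0$ in the autocorrelations. The paper instead works multiplicatively, computing $\xi^k(D)$ via Eisenstein sums (Lemma~\ref{lem:character_values_gmw}) so that $f(e^{2\pi ik/(q-1)})=C_kG(\xi^k)$ with $\abs{C_k}=1$ depending only on $k$ modulo $s-1$.

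The genuine gap is in the analytic part, and it is the crux of the whole proof. The quantity you must evaluate, $\sum_{u\ge 1}C_u^2$, is of order $q^2$ and is spread over roughly $Tq$ values of $u$, so the typical nonresonant $\abs{C_u}$ is of order $\sqrt{q}$ --- exactly the size of the Weil/completion bound for a single incomplete character sum along a geometric progression. Hence the entire limit, not merely an error term, is carried by the nonresonant incomplete sums: \emph{bounding} them, as your final paragraph proposes, can only give $\sum_u C_u^2=O(q^2\log^2 q)$ and can never produce the constant $T^2/(2\varphi_0(0,T))$. To evaluate the mean square of these incomplete sums one must open the fourth moment, which produces complete sums of products of four additive characters over the surface $wx=yz$, equivalently sums of products of four Gauss sums over the full multiplicative character group; the required square-root cancellation there ($O(q^{5/2})$ instead of the trivial $O(q^3)$) is precisely Katz's theorem (Lemma~\ref{lem:sum_of_gauss_sums}), a deep input that is absent from your sketch and for which you offer no substitute. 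Relatedly, the claim that everything ``reduces to a single twisted character'' is not justified: after the resonances collapse, the cross terms between distinct twists $c\ne c'$ in the fourth moment survive as such complete character sums, and the paper disposes of them only through the coset-averaging step in Proposition~\ref{pro:L_gmw} (replacing the sum over the index-$(s-1)$ subgroup $H$ by the sum over all characters, using that the unimodular factor is constant on cosets of $H$) followed by Katz's bound. Until you supply this aggregate cancellation, or an equivalent estimate, the argument does not close.
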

\par
In the particular case of Singer difference sets, Theorem~\ref{thm:gmw} reduces to~\cite[Theorem~2.2~(i)]{JedKatSch2013}. The case that the Gordon-Mills-Welch difference sets in Theorem~\ref{thm:gmw} are of the form~\eqref{eqn:gmw_construction} when $B$ is a Singer difference set proves~\cite[Conjecture~7.1]{JedKatSch2013}\footnote{Conjecture~7.1 of \cite{JedKatSch2013} also involves ``negaperiodic'' and ``periodic'' extensions of the polynomials associated with Gordon-Mills-Welch difference sets. The corresponding assertions can be obtained as direct consequences of Proposition~\ref{pro:L_gmw} and~\cite[Theorem~4.2]{JedKatSch2013}, but are omitted here for the sake of simplicity.}. 
\par
Next we consider subsets of $\F_q^*$ for an odd prime power $q$, which are related to a construction of Sidelnikov~\cite{Sid1969}. We call a set of the form
\begin{equation}
\{x\in\F_q^*:\text{$x+1$ is zero or a square in $\F_q^*$}\}   \label{eqn:def_sidelnikov}
\end{equation}
a \emph{Sidelnikov set} in $\F_q^*$. Such a set gives rise to a so-called almost difference set~\cite[Theorem~4]{AraDinHelKumMar2001}. We have the following result for the asymptotic merit factor of the associated polynomials, proving~\cite[Conjecture~7.2]{JedKatSch2013}.
\begin{theorem}
\label{thm:sidelnikov}
Let $q$ be an odd prime power and let $f$ be a characteristic polynomial of a Sidelnikov set in $\F_q^*$. Let $T>0$ be real. If $t/q\to T$, then $F(f_{r,t})\to \varphi_0(0,T)$ as $q\to\infty$.
\end{theorem}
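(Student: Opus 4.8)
My plan is to compute the $L^4$ norm through the aperiodic autocorrelations and to analyse these with the quadratic character $\eta$ of $\F_q$. Put $n=q-1$ and let $a_j=\1_D(\theta^j)$ be the Sidelnikov sequence. Since $\theta^j\in D$ precisely when $\theta^j+1$ is a nonzero square, $a_j=\eta(\theta^j+1)$ for all $j$ with $\theta^j\neq-1$, whereas at the single index $j_0$ with $\theta^{j_0}=-1$ one has $\eta(\theta^{j_0}+1)=0$ while $a_{j_0}=1$. Writing $b_j=\eta(\theta^j+1)$ and $c_u=\sum_i b_{r+i}b_{r+i+u}$ for the aperiodic autocorrelations of the windowed sequence, I would first check that passing from $a$ to $b$ perturbs each $c_u$ by $O(1)$ and so alters $\sum_{u\geq1}c_u^2$ by only $O(t^{3/2})$; since this quantity has order $t^2$, it suffices to treat $b$. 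Because $\norm{f}_2^2=t$ and $\norm{f}_4^4-\norm{f}_2^4=2\sum_{u\geq1}c_u^2$, the theorem reduces to proving $\frac{2}{t^2}\sum_{u\geq1}c_u^2\to 1/\varphi_0(0,T)$.

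As a first computation I would record the periodic autocorrelations $C_u=\sum_{x\in\F_q^*}\eta\big((x+1)(\theta^u x+1)\big)$ from the classical evaluation of the quadratic character of a quadratic polynomial: here the discriminant is $(\theta^u-1)^2$, so for $u\not\equiv0\pmod n$ one gets $C_u=-\eta(\theta^u)-1$, that is $-2$ for even $u$ and $0$ for odd $u$, the almost-difference-set property. The main step is to expand
\[
\sum_{u\geq1}c_u^2=\sum_{u\geq1}\;\sum_{i,i'}\eta\big((\theta^{r+i}+1)(\theta^{r+i+u}+1)(\theta^{r+i'}+1)(\theta^{r+i'+u}+1)\big),
\]
with $i,i'$ ranging over the window, and to separate the coincidence terms from the generic ones.

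The exact diagonal $i=i'$ makes the four factors pair into squares, so its summand is identically $1$ and contributes $\sum_{u\geq1}(t-u)\sim t^2/2$; this is the baseline $1$ in $1/\varphi_0$. When $T>1$ the window wraps around and further coincidences $\theta^{r+i}=\theta^{r+i'}$ occur whenever $i\equiv i'\pmod n$; counting these against the window geometry yields, after normalising by $t$ and letting $t/q\to T$, the terms $4\sum_{m\in\N}\max(0,1-m/T)^2$. The remaining generic terms are genuine quadratic character sums with cancellation; completing them to sums over $\F_q$ and accumulating the resulting explicit values against the window produces the term $-\tfrac{2T}{3}$, so that the limit is exactly $1-\tfrac{2T}{3}+4\sum_{m\in\N}\max(0,1-m/T)^2=1/\varphi_0(0,T)$. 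This parallels the Singer and Gordon--Mills--Welch computation behind Theorem~\ref{thm:gmw}. The structural reason the answer is $\varphi_0$ rather than some $\varphi_\nu$ with $\nu>0$ is that the window sits inside the multiplicative group ordered by powers of $\theta$ and the shifted argument $\theta^j+1$ generates no coherent secondary correlation; consequently no term carrying the $R$-dependent sum in the definition of $\varphi_\nu$ survives, which also explains why the shift $r$ is asymptotically irrelevant.

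The principal difficulty is the uniform control of incomplete sums. The autocorrelations $c_u$ run over intervals $\{\theta^r,\dots,\theta^{r+\ell}\}$ of the exponent, which are not intervals of $\F_q$, so the sums cannot be completed by elementary means; I would detect these exponent intervals by multiplicative characters $\chi$ of $\F_q^*$ and bound the complete mixed sums $\sum_x\eta\big((x+1)(\theta^u x+1)\big)\chi(x)$ by Weil's theorem, then verify that, after summing the $O(\sqrt{q})$ bounds over all shifts $u$ and all detecting characters, the total error is $o(t^2)$. Carrying out this error analysis uniformly, together with the bookkeeping of the wrap-around coincidences for $T>1$ and the check that every cross term between distinct coincidence classes is of lower order, is where I expect the real work to lie; the identification of the limit itself is comparatively straightforward once the generic sums are under control.
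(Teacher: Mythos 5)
Your reduction to $\tfrac{2}{t^2}\sum_{u\ge 1}c_u^2\to 1/\varphi_0(0,T)$, the $O(1)$ comparison between $a_j$ and $\eta(\theta^j+1)$, and the evaluation $C_u=-\eta(\theta^u)-1$ are all fine, and the route via aperiodic autocorrelations is genuinely different from the paper's (which works with the spectral quantity $L_f(a,b,c)$, expresses $f(e^{2\pi ik/(q-1)})=(-1)^k(1+J(\eta,\xi^k))$ via Jacobi sums, and feeds the result into the transference Theorem~\ref{thm:mf_from_L_2}). But the error analysis you sketch does not close, and the missing ingredient is exactly the deep input the paper relies on. After expanding $c_u^2$ and detecting the exponent intervals by multiplicative characters, you face complete sums of the shape
\[
\sum_{v\in\F_q^*}\;\sum_{x,x'\in\F_q^*}\eta\bigl((x+1)(vx+1)(x'+1)(vx'+1)\bigr)\,\chi_1(x)\,\chi_2(x')\,\chi_3(v),
\]
with $v=\theta^u$, and you need these to be $o(q^2/(\log q)^{O(1)})$. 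The one-variable Weil bound you invoke gives $O(\sqrt q\,)$ for each inner sum at fixed $v$ (or at fixed $(x,x')$ summed over $v$), and summing those bounds trivially over the remaining $\sim q$ (resp.\ $\sim q^2$) values yields $O(q^2)$ at best --- precisely the order of the main term, with no room for the $(\log q)^3$ losses from the completion. One needs square-root cancellation in \emph{two} of the three variables simultaneously. The paper obtains this from Katz's theorem (Lemma~\ref{lem:sum_of_gauss_sums}), a monodromy/equidistribution result bounding $\sum_\chi G(\chi\alpha_1)\cdots\overline{G(\chi\beta_s)}$ by $\max(r,s)q^{(r+s+1)/2}$, which saves the crucial factor $\sqrt q$ over the termwise bound and is not a consequence of the ordinary Weil bound. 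Without naming an estimate of this strength, the ``real work'' you defer is not just bookkeeping; it is the heart of the proof and your stated toolkit cannot supply it.

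A second, related gap: your structural claim that ``the shifted argument $\theta^j+1$ generates no coherent secondary correlation'' is false as stated, and your enumeration of coincidence configurations (only $i\equiv i'\pmod{q-1}$) is incomplete. Since $\theta^{(q-1)/2}=-1$, one has $\eta(\theta^{j+(q-1)/2}+1)=\eta(1-\theta^{j})$, and the resulting identity $J(\eta,\chi)J(\overline\eta\,\overline\chi\cdot\eta,\dots)$ (Lemma~\ref{lem:jacobi}(v)) forces $L_f(a,b,c)\approx 1$ on the whole coset $a=(q-1)/2$, $b=c+(q-1)/2$ --- this is the $K_{q-1}$ term in the paper. It produces additional main-term configurations in your quadruple sum whose net contribution vanishes only because of an alternating sign $(-1)^{j_3-j_2}$ (the terms $C-D_2-D_3$ in the proof of Theorem~\ref{thm:mf_from_L_2}); it is not ``of lower order'' termwise. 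Your argument must identify these configurations and prove their cancellation, otherwise the identification of the limit as $\varphi_0(0,T)$ rather than some other value is unjustified.
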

\par
The maximum asymptotic merit factor that can be obtained in Theorems~\ref{thm:gmw} and~\ref{thm:sidelnikov} is $3.342065\dots$, the largest root of
\[
7X^3-33X^2+33X-3.
\]
\par
Next we construct Littlewood polynomials using cyclotomy. Let $m$ be a positive integer and let $p$ be a prime satisfying $p\equiv 1\pmod m$. Let $\omega$ be a fixed primitive element in $\F_p$. Let $C_0$ be the set of $m$-th powers in $\F_p^*$ and write $C_s=\omega^sC_0$ for $s\in\Z$. The sets $C_0,C_1,\dots,C_{m-1}$ partition $\F_p^*$ and are called the \emph{cyclotomic classes} of $\F_p$ of order $m$.
\par
We construct subsets $D$ of the additive group $\F_p$ by joining some of these classes. This method provides a rich source of difference sets (see~\cite{Jun1992} for a survey). We may take $1$ as a generator for $\F_p$, in which case the polynomials associated with $D$ are
\[
f_{r,t}(z)=\sum_{j=0}^{t-1}\1_D(j+r)z^j
\]
and a characteristic polynomial is $f_{0,p}$ (this is no loss of generality; if the generator is $v$, then replace $D$ by $v^{-1}D$). It follows from~\cite[Theorem~2.1]{JenJenHoh1991} that the shifted characteristic polynomials associated with $D$ have a nonzero asymptotic merit factor only if $\abs{D}/p$ approaches $1/2$ as $p\to\infty$, thus $m$ must be even and $D$ must be a union of $m/2$ cyclotomic classes. Two families of difference sets arise in this way, namely the Paley difference sets for $m=2$ and the Hall difference sets for $m=6$~\cite[Theorem~2.2]{Hal1956}. If $D$ is a union of $m/2$ cyclotomic classes of order $m$, then $\abs{D}=(p-1)/2$, so if $D$ is a difference set, then it must have Hadamard parameters. Equivalently, $\abs{(D+u)\cap D}=(p-3)/4$ for every $u\in\F_p^*$. Our next theorem applies not only to such difference sets, but requires this condition to hold asymptotically (in a precise sense).
\begin{theorem}
\label{thm:mf_cyclotomic}
Let $m$ be an even positive integer and let $S$ be an $m/2$-element subset of $\{0,1,\dots,m-1\}$. Let $p$ take values in an infinite set of primes satisfying $p\equiv 1\pmod m$. Let $D$ be the union of the $m/2$ cyclotomic classes $C_s$ with $s\in S$ of $\F_p$ of order $m$ and suppose that, as $p\to\infty$,
\begin{equation}
\frac{(\log p)^3}{p^2}\sum_{u\in\F_p^*}\bigg(\bigabs{(D+u)\cap D}-\frac{p}{4}\bigg)^2\to 0.   \label{eqn:cond_Ru}
\end{equation}
Let $f$ be a characteristic polynomial of $D$ and let $R$ and $T>0$ be real. If $r/p\to R$ and $t/p\to T$, then the following hold as $p\to\infty$:
\begin{enumerate}[(i)]
\setlength{\itemsep}{1ex}
\item If $\frac{p-1}{m}$ is even for every $p$, then $F(f_{r,t})\to\varphi_1(R,T)$.
\item If $\frac{p-1}{m}$ is odd for every $p$, then $F(f_{r,t})\to\varphi_\nu(R,T)$, where $\nu=(\tfrac{4N}{m}-1)^2$ and
\[
N=\bigabs{\{(s,s')\in S\times S:s-s'=m/2\}}.
\]
\end{enumerate}
\end{theorem}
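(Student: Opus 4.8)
The plan is to pass to aperiodic autocorrelations and then to the additive‑character side. Writing $b_j=\1_D(r+j)$ and letting $C_k=\sum_j b_{j+k}b_j$ be the aperiodic autocorrelations of the length‑$t$ window, we have $\norm{f_{r,t}}_2^4=t^2$ and $\norm{f_{r,t}}_4^4-\norm{f_{r,t}}_2^4=\sum_{k\neq 0}C_k^2$, so the assertion $F(f_{r,t})\to\varphi_\nu(R,T)$ is equivalent to $\tfrac1{t^2}\sum_{k\neq 0}C_k^2\to 1/\varphi_\nu(R,T)$. I would introduce $\Theta(w)=\sum_{x\in\F_p}\1_D(x)\,e_p(wx)$ with $e_p(x)=e^{2\pi i x/p}$; inserting the inversion $\1_D(x)=\tfrac1p\sum_w\Theta(w)e_p(-wx)$ into $C_k$ turns the demerit $\sum_k C_k^2$ into a sum over frequency quadruples of products $\Theta(w_1)\Theta(w_2)\Theta(w_3)\Theta(w_4)$ (using $\overline{\Theta(w)}=\Theta(-w)$) weighted by geometric sums coming from the window of length $t$. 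These geometric weights localise the quadruples near resonances $w_i+w_j\equiv 0$, and in the limit $r/p\to R$, $t/p\to T$ they converge to the triangle functions $\max(0,1-\abs{\cdot}/T)$ occurring in $\varphi_\nu$; the offsets from exact resonance enter through phases $e_p(\cdot\,r)\to e^{2\pi i(\cdot)R}$, which is the source of the $R$-dependence.

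I would then split the surviving contributions into two groups. The \emph{primary} group consists of configurations in which the four factors pair into two conjugate products $\abs{\Theta(w)}^2$; this group is insensitive to the phase structure of $\Theta$ and reproduces the $\nu$- and $R$-independent part $1-\tfrac{2T}{3}+4\sum_{m\in\N}\max(0,1-m/T)^2$ of $1/\varphi_\nu$. Its only analytic input is that $\abs{\Theta(w)}^2$ is close to $p$ in mean square: by the Wiener–Khinchin identity $\gamma_u=\tfrac1p\sum_w\abs{\Theta(w)}^2e_p(-uw)$ and Parseval, hypothesis \eqref{eqn:cond_Ru} translates, after the affine change $\abs{(D+u)\cap D}=(\gamma_u+p-2)/4$, into $\sum_{w\neq 0}(\abs{\Theta(w)}^2-p)^2=o\big(p^3/(\log p)^3\big)$, which lets me replace $\abs{\Theta(w)}^2$ by $p$ with controlled error. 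The \emph{secondary} group consists of configurations in which two unconjugated pairs $\Theta(w)^2$ survive; these carry the $R$-dependent phases and are governed by $\sum_w\Theta(w)^2$. By Parseval, $\sum_w\Theta(w)^2=p\sum_{x\in\F_p}\1_D(x)\1_D(-x)=\mu p^2$, where $\mu=\tfrac1p\sum_x\1_D(x)\1_D(-x)$; since each of the two pairs contributes one factor of $\mu$, this group produces exactly the $\nu$-scaled remainder $-\tfrac{2\nu T}{3}+\nu\sum_{m\in\Z}\max(0,1-\abs{1+(2R-m)/T})^2$ with $\nu=\mu^2$.

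It then remains to evaluate $\mu$ and to justify the factorisation $\sum_w\Theta(w)^2K(w)\sim\mu\,p\sum_w K(w)$ for the slowly varying kernels $K$ arising above; this is where cyclotomy is essential, since \eqref{eqn:cond_Ru} says nothing about the phases of $\Theta(w)^2$. As $\1_D$ is constant on each class and $-C_s=C_{s+m/2}$, a direct count gives $\sum_x\1_D(x)\1_D(-x)=\tfrac{p-1}{m}\sum_{s}\sigma_s\sigma_{s+m/2}$, with $\sigma_s=\pm1$ according to whether $s\in S$, whence $\mu\to\tfrac{4N}{m}-1$. The dichotomy is whether $-1=\omega^{(p-1)/2}$ is an $m$-th power: this holds exactly when $\tfrac{p-1}{m}$ is even, giving $\1_D(-x)=\1_D(x)$ and $\mu=1$ (case (i)), whereas for $\tfrac{p-1}{m}$ odd one has $-1\in C_{m/2}$ and $\mu=\tfrac{4N}{m}-1$ (case (ii)); in both cases $\nu=\mu^2$. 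To license the kernel factorisation I would expand, for a character $\chi$ of order $m$ and $w\neq0$, $\Theta(w)=\tfrac2m\sum_{j=1}^{m-1}g(\chi^j)\overline{\chi^j(w)}\eta_j+O(1)$ with $\eta_j=\sum_{s\in S}\overline{\chi^j(\omega^s)}$ and $g(\chi^j)$ a Gauss sum; in $\sum_w\Theta(w)^2K(w)$ only the diagonal $j'=m-j$ survives (the remaining terms give character-sum cancellation against $K$), and the identity $g(\chi^j)g(\chi^{m-j})=\chi^j(-1)p$ together with $\eta_{m-j}=\overline{\eta_j}$ recovers the same constant $\tfrac{4}{m^2}\sum_{j=1}^{m-1}\chi^j(-1)\abs{\eta_j}^2=\mu$.

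The main obstacle lies in the error analysis rather than in identifying the limit. On the combinatorial side one must pin down the exact constants ($4$ and $-\tfrac{2(1+\nu)T}{3}$) and the precise triangle functions by evaluating the window's geometric sums uniformly, including the wrap-around shifts $k\equiv 0\pmod p$ that produce the $T>1$ terms. More seriously, one must show, uniformly in $R$ and $T$, that every non-resonant frequency configuration and every off-diagonal Gauss-sum term is negligible. These are controlled by Cauchy–Schwarz against the mean-square deviation $\sum_w(\abs{\Theta(w)}^2-p)^2$ supplied by \eqref{eqn:cond_Ru}, balanced against the $L^2$ sizes of the kernels $K$, which carry logarithmic factors; the weight $(\log p)^3$ in \eqref{eqn:cond_Ru} is calibrated precisely so that this product tends to $0$. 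Making the estimates uniform, so that the convergence survives the limits $r/p\to R$ and $t/p\to T$, is where the real work lies.
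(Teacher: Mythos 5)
Your outline is, at bottom, the same proof as the paper's, reorganized. The paper reduces everything to the quantity $L_f(a,b,c)=\frac{1}{p^3}\sum_k f(\e_k)f(\e_{k+a})\overline{f(\e_{k+b})f(\e_{k+c})}$ and to a black-box limit theorem (Theorem~\ref{thm:mf_from_L_1}, adapted from earlier work), which already encodes your ``window geometric sums converge to triangle functions'' step; the only new work is then to show $L_f\approx I_p+\nu J_p$ pointwise. Your ``primary'' and ``secondary'' groups are exactly the configurations where $I_p$ resp.\ $J_p$ equal $1$, and your identification of $\nu=\mu^2$ with $\mu=\frac1p\sum_x\1_D(x)\1_D(-x)$, obtained from $G(\chi^j)G(\chi^{m-j})=\chi^j(-1)p$ and the position of $-1$ among the cyclotomic classes, is precisely the paper's evaluation of $L_f(0,b,b)$, phrased more conceptually --- a nice repackaging, and your value of $\mu$ agrees with the stated $\nu$ in both parity cases. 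One genuine difference: you use hypothesis~\eqref{eqn:cond_Ru} to control the entire primary group in mean square, whereas the paper uses it only at the single point $(0,0,0)$ and handles $L_f(a,0,a)$ for $a\ne 0$ by the Weil bound; your version works there and is arguably cleaner.

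The gap is in your final paragraph. You claim that every non-resonant frequency configuration is ``controlled by Cauchy--Schwarz against the mean-square deviation $\sum_w(\abs{\Theta(w)}^2-p)^2$ supplied by~\eqref{eqn:cond_Ru}.'' It is not: that hypothesis constrains only the magnitudes $\abs{\Theta(w)}$, and Cauchy--Schwarz then yields, for an arbitrary triple $(a,b,c)$, only $\bigabs{\sum_k\Theta(k)\Theta(k+a)\overline{\Theta(k+b)\Theta(k+c)}}\le\big(\sum_k\abs{\Theta(k)\Theta(k+a)}^2\big)^{1/2}\big(\sum_k\abs{\Theta(k+b)\Theta(k+c)}^2\big)^{1/2}=(1+o(1))\,p^3$, i.e.\ $\abs{L_f(a,b,c)}\le 1+o(1)$, whereas the argument needs $o(1/(\log p)^3)$ because the Fej\'er-type weights attached to the off-resonant triples have total mass of order $(\log p)^3$ after normalisation. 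What is actually required is square-root cancellation in the complete character sums $\sum_k\chi^{j_1}(k)\chi^{j_2}(k+a)\overline{\chi^{j_3}(k+b)\chi^{j_4}(k+c)}$ that appear once the Gauss-sum expansion of $\Theta$ is substituted into all four factors; this is the Weil bound, the decisive number-theoretic input of Proposition~\ref{pro:L_cyclotomic}, and it cannot be replaced by~\eqref{eqn:cond_Ru}. You already deploy exactly this expansion on the diagonal of the secondary group, so the fix is to apply it to the non-resonant configurations and to the off-diagonal terms of the secondary group as well; but as written, the error analysis rests on a hypothesis that cannot deliver the needed bound.
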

\par
Several remarks on Theorem~\ref{thm:mf_cyclotomic} follow. It is readily verified that $\nu$ in Theorem~\ref{thm:mf_cyclotomic} satisfies $\nu\in[0,1]$. The condition~\eqref{eqn:cond_Ru} is essentially necessary since
\[
\frac{1}{F(f)}\ge\frac{8}{p^2}\sum_{u\in \F_p^*}\bigg(\bigabs{(D+u)\cap D}-\frac{p-2}{4}\bigg)^2.
\]
This can be deduced from the proof of Theorem~\ref{thm:mf_cyclotomic} and the inequality
\[
\norm{f}_4^4\ge\frac{1}{2p}\sum_{k\in\F_p}\abs{f(e^{2\pi ik/p})}^4+\frac{p^2}{2},
\]
which can be obtained from~\cite[(2.3)]{HohJen1988} with an extra step involving the Cauchy-Schwarz inequality. In fact, this is a refinement of the Marcinkiewicz-Zygmund inequality~\cite[Chapter~X, Theorem~7.5]{Zyg1959} for the $L^4$ norm.
\par
The condition~\eqref{eqn:cond_Ru} can be checked using the cyclotomic numbers of order~$m$, which are the $m^2$ numbers
\[
\abs{(C_i+1)\cap C_j}
\]
for $0\le i,j<m$. These numbers are known explicitly for all even $m\le 20$ and for $m=24$ (see~\cite[p.~152]{BerEvaWil1998} for a list of references). Also note that the conclusion of Theorem~\ref{thm:mf_cyclotomic} remains unchanged if we replace $S$ by $h+S$ reduced modulo~$m$ for an integer $h$ (which changes $D$ to $\omega^hD$).
\par
We now consider in detail the cases $m\in\{2,4,6\}$ of Theorem~\ref{thm:mf_cyclotomic}. If $m=2$, then $D$ consists of either the squares or the nonsquares of $\F_p^*$. In both cases, $D$ is a Paley difference set for $p\equiv 3\pmod 4$. As remarked above, we can assume without loss of generality that $D$ is the set of squares in $\F_p^*$. Then we have (see~\cite[Theorem~2.2.2]{BerEvaWil1998}, for example)
\[
4\,\abs{(D+u)\cap D}=\begin{cases}
p-4-(-1)^{\frac{p-1}{2}} & \text{for $u$ a square in $\F_p^*$}\\[.5ex]
p-2+(-1)^{\frac{p-1}{2}} & \text{for $u$ a nonsquare in $\F_p^*$}.
\end{cases}
\]
Noting that $\nu=1$ for $m=2$, we obtain the following corollary, which is essentially the main result of~\cite{JedKatSch2013a} (see also~\cite[Theorem~2.1]{JedKatSch2013}).
\begin{corollary}
\label{cor:second_order}
Let $p$ take values in an infinite set of odd primes, let $D$ be either the set of squares or the set of nonsquares of $\F_p^*$ and let $f$ be a characteristic polynomial of $D$. Let $R$ and $T>0$ be real. If $r/p\to R$ and $t/p \to T$, then $F(f_{r,t})\to \varphi_1(R,T)$ as $p\to\infty$.
\end{corollary}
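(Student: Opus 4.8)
The plan is to obtain Corollary~\ref{cor:second_order} as the specialisation of Theorem~\ref{thm:mf_cyclotomic} to $m=2$. Here the cyclotomic classes of order $2$ are $C_0$, the set of squares of $\F_p^*$, and $C_1=\omega C_0$, the set of nonsquares. By the remark following Theorem~\ref{thm:mf_cyclotomic} that replacing $S$ by $h+S\pmod m$ leaves the conclusion unchanged, I may assume without loss of generality that $D=C_0$, so that $S=\{0\}$. The hypothesis $p\equiv 1\pmod m$ required by Theorem~\ref{thm:mf_cyclotomic} is automatic for $m=2$ since $p$ is odd. It then remains only to check condition~\eqref{eqn:cond_Ru} and to evaluate the parameter $\nu$ in the two parts of the theorem.

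First I would verify~\eqref{eqn:cond_Ru} from the explicit formula for $\abs{(D+u)\cap D}$ displayed above. For each $u\in\F_p^*$ that formula gives $\abs{(D+u)\cap D}-\tfrac{p}{4}=\tfrac14\bigl(-4-(-1)^{(p-1)/2}\bigr)$ when $u$ is a square and $\tfrac14\bigl(-2+(-1)^{(p-1)/2}\bigr)$ when $u$ is a nonsquare; in either case this quantity is bounded independently of $p$ and $u$. Hence the inner sum over the $p-1$ elements of $\F_p^*$ is $O(p)$, so the whole expression in~\eqref{eqn:cond_Ru} is $O\bigl((\log p)^3/p\bigr)$, which tends to $0$ as $p\to\infty$. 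This settles~\eqref{eqn:cond_Ru}.

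Next I would decide which conclusion of Theorem~\ref{thm:mf_cyclotomic} applies. With $S=\{0\}$ and $m/2=1$ there is no pair $(s,s')\in S\times S$ with $s-s'=1$, so $N=0$ and therefore $\nu=(\tfrac{4N}{m}-1)^2=1$. Consequently both parts of the theorem yield the limit $\varphi_1(R,T)$: part~(i) directly, and part~(ii) because $\varphi_\nu=\varphi_1$. Since for a given odd prime the integer $(p-1)/2$ is even exactly when $p\equiv 1\pmod 4$ and odd exactly when $p\equiv 3\pmod 4$, I would split the given infinite set of primes into these two residue classes, apply part~(i) to the first and part~(ii) to the second, and combine the two limits---both equal to $\varphi_1(R,T)$---to conclude that $F(f_{r,t})\to\varphi_1(R,T)$ for the full sequence.

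The argument is essentially a bookkeeping exercise once Theorem~\ref{thm:mf_cyclotomic} is available, so no single step presents a genuine obstacle. The only point requiring care is the observation that the value $\nu=1$ makes the two parity cases coincide, so that the dichotomy in Theorem~\ref{thm:mf_cyclotomic} collapses and the hypothesis on a fixed parity of $\frac{p-1}{m}$ can be circumvented by partitioning the prime set according to $p\bmod 4$.
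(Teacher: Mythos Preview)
Your proposal is correct and follows essentially the same route as the paper: specialise Theorem~\ref{thm:mf_cyclotomic} to $m=2$, reduce to $D=C_0$ by the shift remark, verify condition~\eqref{eqn:cond_Ru} from the explicit values of $\abs{(D+u)\cap D}$, and observe that $\nu=1$. Your treatment of the parity dichotomy---splitting the prime set according to $p\bmod 4$ and noting that both branches of Theorem~\ref{thm:mf_cyclotomic} yield $\varphi_1$---is slightly more explicit than the paper, which simply records that $\nu=1$ and leaves this collapse implicit.
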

\par
We now look at the case $m=4$. Here, we only need to consider two cases for joining two cyclotomic classes of order four, namely $C_0\cup C_2$ and $C_0\cup C_1$. The first case brings us back to $m=2$. When $p$ is of the form $x^2+4$ for $x\in\Z$ and $(p-1)/4$ is odd, the second case gives rise to the Ding-Helleseth-Lam almost difference sets~\cite[Theorem~4]{DinHelLam1999}. By inspecting the cyclotomic numbers of order four (see Section~\ref{sec:residues}), we obtain the following.
\begin{corollary}
\label{cor:fourth_order}
Let $p$ take values in an infinite set of primes of the form $x^2+4y^2$ for $x,y\in\Z$ such that $y^2(\log p)^3/p\to 0$ as $p\to\infty$. Let $D$ be the union of two cyclotomic classes of $\F_p$ of order four and let $f$ be a characteristic polynomial of $D$. Let $R$ and $T>0$ be real. If $r/p \to R$ and $t/p \to T$, then $F(f_{r,t})\to \varphi_1(R,T)$ as $p\to\infty$.
\end{corollary}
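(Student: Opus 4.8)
The plan is to deduce the corollary from Theorem~\ref{thm:mf_cyclotomic} and Corollary~\ref{cor:second_order} after cutting down the number of cases. Since the conclusion of Theorem~\ref{thm:mf_cyclotomic} is unaffected when $S$ is replaced by $h+S$ modulo $4$, and every two-element subset of $\{0,1,2,3\}$ is such a translate of either $\{0,2\}$ or $\{0,1\}$, it suffices to treat these two representatives. For $\{0,2\}$ the set $D=C_0\cup C_2$ is precisely the set of squares in $\F_p^*$, so this case is already covered by Corollary~\ref{cor:second_order}, giving $F(f_{r,t})\to\varphi_1(R,T)$. It remains to deal with $D=C_0\cup C_1$.

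For $D=C_0\cup C_1$ I would invoke Theorem~\ref{thm:mf_cyclotomic} with $m=4$ and $S=\{0,1\}$. The value of $\nu$ is immediate: no two elements of $\{0,1\}$ differ by $m/2=2$, so $N=0$ and hence $\nu=(\tfrac{4N}{m}-1)^2=1$. Thus both parts (i) and (ii) of the theorem produce the limit $\varphi_1(R,T)$, regardless of the parity of $\tfrac{p-1}{4}$, and the only real task is to verify hypothesis~\eqref{eqn:cond_Ru}.

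To check~\eqref{eqn:cond_Ru}, I would first use that $D$ is a union of cyclotomic classes to conclude that $R_u:=\bigabs{(D+u)\cap D}$ is constant on each class $C_k$, say equal to $\rho_k$; writing $u\in C_k$ and expanding gives $\rho_k=\sum_{i,j\in\{0,1\}}\bigabs{(C_{i-k}+1)\cap C_{j-k}}$ with indices read modulo $4$. Since each class has $(p-1)/4$ elements, the sum in~\eqref{eqn:cond_Ru} collapses to
\[
\sum_{u\in\F_p^*}\bigg(R_u-\frac p4\bigg)^2=\frac{p-1}{4}\sum_{k=0}^{3}\bigg(\rho_k-\frac p4\bigg)^2.
\]
I would then insert the explicit cyclotomic numbers of order four recorded in Section~\ref{sec:residues}, handling the cases $\tfrac{p-1}{4}$ even and $\tfrac{p-1}{4}$ odd separately, where $p=x^2+4y^2$.

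The decisive point---and the step I expect to be the main obstacle---is to show that in both parity cases $\rho_k-\tfrac p4=O(\abs y)$ uniformly in $k$. A priori each of the four cyclotomic numbers constituting $\rho_k$ equals $\tfrac{p}{16}$ plus a correction of size $O(\sqrt p)$ linear in $x$ and $y$, so the crude bound $\rho_k-\tfrac p4=O(\sqrt p)$ would only give $O(p^2)$ for the displayed sum and would not suffice. The computation must instead exploit the precise shape of the order-four cyclotomic numbers: concretely, the coefficients of $x$ in the four numbers constituting each $\rho_k$ cancel and the leading $\tfrac p4$ is absorbed, so that only a term of size $O(\abs y)$ survives. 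Granting $\rho_k-\tfrac p4=O(\abs y)$, the displayed sum is $O(p\,y^2)$, whence
\[
\frac{(\log p)^3}{p^2}\sum_{u\in\F_p^*}\bigg(R_u-\frac p4\bigg)^2=O\!\left(\frac{y^2(\log p)^3}{p}\right)\longrightarrow 0
\]
by hypothesis; this is exactly~\eqref{eqn:cond_Ru}, and Theorem~\ref{thm:mf_cyclotomic} then completes the proof.
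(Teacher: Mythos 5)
Your proposal is correct and follows essentially the same route as the paper: reduce to $D=C_0\cup C_1$ via translation (the case $C_0\cup C_2$ reverting to $m=2$), note $N=0$ so $\nu=1$ in both parity cases, and verify~\eqref{eqn:cond_Ru} from the order-four cyclotomic numbers. The cancellation you flag as the decisive point does occur exactly as you predict: the paper's Table~\ref{tab:cyc_num_4} records $4\abs{(D+u)\cap D}-(p-2)\in\{-3\pm 2y,\,1\pm 2y,\,-1\pm 2y\}$, so $R_u-\tfrac p4=O(\abs{y})$ and the sum in~\eqref{eqn:cond_Ru} is $O(py^2)$ as required.
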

\par
Recall from elementary number theory that primes of the form $x^2+4y^2$ for $x,y\in\Z$ are exactly the primes that are congruent to $1$ modulo $4$. It is also known~\cite{Col1993} that there are infinitely many primes satisfying the hypothesis of the corollary. 
\par
The case $m=6$ is the first situation, where different limiting functions occur. In this case, there are four different sets $D$ to consider, namely
\begin{equation}
C_0\cup C_2\cup C_4,\quad C_0\cup C_1\cup C_2,\quad C_0\cup C_1\cup C_3,\quad C_0\cup C_1\cup C_4.   \label{eqn:sets_of_order_six}
\end{equation}
Again, the first set brings us back to $m=2$. When $p$ is of the form $x^2+27$ for $x\in\Z$ and $(p-1)/6$ is odd, then either the third or the fourth set in~\eqref{eqn:sets_of_order_six} gives rise to Hall difference sets~\cite{Hal1956} (the choice depends on the primitive element~$\omega$). We shall see that Theorem~\ref{thm:mf_cyclotomic} gives two possible limiting functions for the sixth cyclotomic classes, which is our motivation for the following definition. Let $D$ be a union of three cyclotomic classes of order six. If there is a $\gamma\in\F_p^*$ such that $\gamma D$ equals one of the first two sets in~\eqref{eqn:sets_of_order_six}, then we say that $D$ is of \emph{Paley type}. Otherwise, we say that $D$ is of \emph{Hall type}. 
\begin{corollary}
\label{cor:sixth_order}
Let $p$ take values in an infinite set of primes of the form $x^2+27y^2$ for $x,y\in\Z$ such that $y^2(\log p)^3/p\to 0$ as $p\to\infty$. Let $D$ be the union of three cyclotomic classes of $\F_p$ of order six and let $f$ be a characteristic polynomial of $D$. Let $R$ and $T>0$ be real. If $r/p\to R$ and $t/p\to T$, then the following hold as $p\to\infty$:
\begin{enumerate}[(i)]
\setlength{\itemsep}{1ex}
\item If, for each $p$, $D$ is of Paley type or $\frac{p-1}{6}$ is even, then $F(f_{r,t})\to \varphi_1(R,T)$.
\item If, for each $p$, $D$ is of Hall type and $\frac{p-1}{6}$ is odd, then $F(f_{r,t})\to \varphi_{1/9}(R,T)$.
\end{enumerate}
\end{corollary}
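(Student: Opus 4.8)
The plan is to derive the corollary from Theorem~\ref{thm:mf_cyclotomic}, which requires two things: pinning down the parameter $\nu$ for each admissible $D$, and verifying the decay condition~\eqref{eqn:cond_Ru}. For the first, I would exploit that both $N$ and the quantity in~\eqref{eqn:cond_Ru} are invariant under scaling $D$ by $\gamma\in\F_p^*$: replacing $D$ by $\gamma D=\omega^h D$ shifts $S$ to $h+S$ modulo~$6$, which leaves the differences $s-s'$ (hence $N$) unchanged, and leaves the sum in~\eqref{eqn:cond_Ru} unchanged since $\bigabs{(\gamma D+u)\cap\gamma D}=\bigabs{(D+\gamma^{-1}u)\cap D}$ and $u\mapsto\gamma^{-1}u$ permutes $\F_p^*$. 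Thus I may take $D$ to be one of the four sets in~\eqref{eqn:sets_of_order_six}. Computing $N=\bigabs{\{(s,s')\in S\times S:s-s'\equiv 3\pmod 6\}}$ for each gives $N=0$ for $S\in\{\{0,2,4\},\{0,1,2\}\}$ (Paley type, so $\nu=1$) and $N=2$ for $S\in\{\{0,1,3\},\{0,1,4\}\}$ (Hall type, so $\nu=(\tfrac{4N}{6}-1)^2=\tfrac19$). Granting~\eqref{eqn:cond_Ru}, the corollary follows: splitting the given primes into the subsequences with $(p-1)/6$ even and odd, Theorem~\ref{thm:mf_cyclotomic}(i) yields $\varphi_1$ on the first, while on the second (where the hypothesis of part~(i) forces Paley type) Theorem~\ref{thm:mf_cyclotomic}(ii) yields $\varphi_\nu=\varphi_1$ for Paley type and $\varphi_{1/9}$ for Hall type, matching the two stated alternatives.

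It remains to verify~\eqref{eqn:cond_Ru}. Writing $u\in C_k$ and substituting $v=u^{-1}x$ shows $\bigabs{(C_s+u)\cap C_{s'}}=\bigabs{(C_{s-k}+1)\cap C_{s'-k}}$, so $\bigabs{(D+u)\cap D}$ depends on $u$ only through its class $k$; denote it $d_k$. Since each class has $(p-1)/6$ elements,
\[
\sum_{u\in\F_p^*}\bigg(\bigabs{(D+u)\cap D}-\frac p4\bigg)^2=\frac{p-1}{6}\sum_{k=0}^{5}\bigg(d_k-\frac p4\bigg)^2,\qquad d_k=\sum_{s,s'\in S}\bigabs{(C_{s-k}+1)\cap C_{s'-k}},
\]
with indices read modulo~$6$, so each $d_k$ is a sum of nine cyclotomic numbers of order six. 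As every such number equals $p/36$ up to a fluctuation of order $\sqrt p$, we have $d_k=p/4+O(\sqrt p)$, and the task is to show that this $O(\sqrt p)$ fluctuation is in fact $O(y)$.

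The heart of the argument is the following estimate, to be read off from the explicit order-six cyclotomic numbers listed in Section~\ref{sec:residues}: writing $4p=L^2+27M^2$ with $L\equiv1\pmod 3$, so that $L=2x$ and $M=2y$ under the hypothesis $p=x^2+27y^2$, the contribution of the $\sqrt p$-sized parameter $L$ to $d_k-\tfrac p4$ \emph{cancels} for every class $k$ and for each of the four sets, leaving $d_k-\tfrac p4=O(M)+O(1)=O(y)$. Granting this, the displayed sum is $O(p)\cdot O(y^2+1)=O(py^2+p)$, whence
\[
\frac{(\log p)^3}{p^2}\sum_{u\in\F_p^*}\bigg(\bigabs{(D+u)\cap D}-\frac p4\bigg)^2=O\bigg(\frac{y^2(\log p)^3}{p}+\frac{(\log p)^3}{p}\bigg)\to 0
\]
by the hypothesis $y^2(\log p)^3/p\to0$, establishing~\eqref{eqn:cond_Ru}.

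The main obstacle is precisely this cancellation of the $L$-term. Because $L\approx2\sqrt p$ is \emph{not} small under the hypothesis (only its companion $M=2y$ is), a single surviving copy of $L$ in any $d_k$ would inflate the sum to order $p^2$ and break~\eqref{eqn:cond_Ru}; the whole corollary hinges on $L$ appearing with total coefficient zero across the nine cyclotomic numbers making up each $d_k$. Carrying this out rigorously requires bookkeeping over the four sets, the six classes $k$, and the two parities of $(p-1)/6$ (the order-six cyclotomic numbers have different closed forms in each parity), together with the sub-case of those numbers selected by $p=x^2+27y^2$, i.e.\ by $2$ being a cubic residue. The individual evaluations are routine once the numbers are tabulated, but verifying the uniform vanishing of the $L$-coefficient is the substantive point, exactly paralleling the cancellation of the $\sqrt p$-sized parameter in the order-four analysis underlying Corollary~\ref{cor:fourth_order}.
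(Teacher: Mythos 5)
Your argument is correct and follows essentially the same route as the paper: reduction to Theorem~\ref{thm:mf_cyclotomic} after using the scaling invariance to normalise $D$ to one of the sets in~\eqref{eqn:sets_of_order_six}, the computation $N=0$, $\nu=1$ for Paley type and $N=2$, $\nu=\tfrac19$ for Hall type, a subsequence split on the parity of $(p-1)/6$, and verification of~\eqref{eqn:cond_Ru} by showing $\abs{(D+u)\cap D}-p/4=O(y)+O(1)$ from Dickson's cyclotomic numbers of order six. The single step you leave as bookkeeping --- that the $\sqrt{p}$-sized parameter $L$ in $4p=L^2+27M^2$ cancels from each class sum $d_k$, leaving only a linear function of $y$ --- is precisely what the paper records in Tables~\ref{tab:cyc_num_6_odd} and~\ref{tab:cyc_num_6_even}, so your outline matches the paper's proof in structure and in its level of detail.
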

\par
It is known that primes of the form $x^2+27y^2$ for $x,y\in\Z$ are exactly the primes $p$ for which $p\equiv 1\pmod 6$ and $2$ is a cube modulo $p$~\cite[Corollary~2.6.4]{BerEvaWil1998}. Again, it is also known~\cite{Col1993} that there are infinitely many primes satisfying the hypothesis of the corollary.
\par
The largest asymptotic merit factor that can be obtained in Corollaries~\ref{cor:second_order},~\ref{cor:fourth_order}, and~\ref{cor:sixth_order}~(i) is $6.342061\dots$, the largest root of
\[
29X^3-249X^2+417X-27,
\]
which equals the best known asymptotic value for Littlewood polynomials. The largest asymptotic merit factor that can be obtained in Corollary~\ref{cor:sixth_order}~(ii) is $3.518994\dots$, the largest root of
\[
349061X^3-1737153X^2+1835865X-159651.
\]
\par
It is also of interest to look at the case $T=1$ in our results, which concerns just the shifted characteristic polynomials, as considered in~\cite{HohJen1988} and~\cite{JenJenHoh1991} for Paley and Singer difference sets, respectively. Since
\[
\frac{1}{\varphi_\nu(R,1)}=\tfrac{1}{6}(2-\nu)+8\nu(R-\tfrac{1}{4})^2\quad\text{for $0\le R\le \tfrac{1}{2}$},
\]
the global maximum of $g_\nu(R,1)$ equals $6/(2-\nu)$. Hence, for $T=1$, Theorems~\ref{thm:gmw} and~\ref{thm:sidelnikov} give an asymptotic merit factor of $3$, Corollaries~\ref{cor:second_order},~\ref{cor:fourth_order}, and~\ref{cor:sixth_order}~(i) give a maximum asymptotic merit factor of $6$ and Corollary~\ref{cor:sixth_order}~(ii) gives a maximum asymptotic merit factor of $54/17$.
\par
We note that there are also ``negaperiodic'' and ``periodic'' versions of Theorem~\ref{thm:gmw}, Theorem~\ref{thm:mf_cyclotomic}, and its corollaries, as considered in~\cite{JedKatSch2013} (but not of Theorem~\ref{thm:sidelnikov} since in this case the characteristic polynomials have odd degree). These follow directly from our results and a generalisation of Theorem~\ref{thm:mf_from_L_1} in the vein of parts (ii) and (iii) of Theorems~4.1 and 4.2 in~\cite{JedKatSch2013}. We omit their statements for the sake of simplicity.
\par
We shall prove Theorems~\ref{thm:gmw} and~\ref{thm:sidelnikov} in Sections~\ref{sec:gmw} and~\ref{sec:sidelnikov}, respectively. Theorem~\ref{thm:mf_cyclotomic} and Corollaries~\ref{cor:fourth_order} and~\ref{cor:sixth_order} will be proved in Section~\ref{sec:residues}.


\section{Asymptotic merit factor calculation}

Let $f(z)=\sum_{j=0}^{n-1}a_jz^j$ be a Littlewood polynomial of degree $n-1$ and let~$r$ and $t$ be integers with $t\ge 0$. Define the polynomial
\[
f_{r,t}(z)=\sum_{j=0}^{t-1}a_{j+r}z^j,
\]
where we extend the definition of $a_j$ so that $a_{j+n}=a_j$ for all $j\in\Z$. Write $\e_k=e^{2\pi ik/n}$. From~\cite{JedKatSch2013} it is known that $F(f_{r,t})$ depends only on the function $L_f:(\Z/n\Z)^3\to\Z$, defined by
\[
L_f(a,b,c)=\frac{1}{n^3}\sum_{k\in\Z/n\Z}f(\e_k)f(\e_{k+a})\overline{f(\e_{k+b})}\overline{f(\e_{k+c})}.
\]
Define the functions $I_n,J_n:(\Z/n\Z)^3\to\Z$ by
\begin{align*}
I_n(a,b,c)&=\begin{cases}
1 & \text{if ($c=a$ and $b=0$) or ($b=a$ and $c=0$),}\\
0 & \text{otherwise}
\end{cases}
\intertext{and}
J_n(a,b,c)&=\begin{cases}
1 & \text{if $a=0$ and $b=c\ne 0$,}\\
0 & \text{otherwise}
\end{cases}
\intertext{and, for even $n$, the function $K_n:(\Z/n\Z)^3\to\Z$ by}
K_n(a,b,c)&=\begin{cases}
1 & \text{if $a=n/2$ and $b=c+n/2$ and $bc\ne 0$,}\\
0 & \text{otherwise}.
\end{cases}
\end{align*}
In order to prove Theorems~\ref{thm:gmw},~\ref{thm:sidelnikov}, and~\ref{thm:mf_cyclotomic} we shall show that the corresponding function $L_f$ is well approximated by either $I_n+\nu J_n$ for an appropriate real $\nu$ or by $I_n+K_n$ and then apply one of the following two theorems. Our first theorem is a slight generalisation of Theorems~4.1~(i) and~4.2~(i) of~\cite{JedKatSch2013}, which arise by setting $\nu=1$ and $\nu=0$, respectively. This theorem can be proved by applying straightforward modifications to the proof of~\cite[Theorem~4.1]{JedKatSch2013}.
\begin{theorem}
\label{thm:mf_from_L_1}
Let $\nu$ be a real number and let $n$ take values in an infinite set of positive integers. For each $n$, let $f$ be a Littlewood polynomial of degree $n-1$ and suppose that, as $n\to\infty$,
\[
(\log n)^3\max\limits_{a,b,c\in\Z/n\Z}\bigabs{L_f(a,b,c)-(I_n(a,b,c)+\nu J_n(a,b,c))}\to 0.
\]
Let $R$ and $T>0$ be real. If $r/n\to R$ and $t/n\to T$, then $F(f_{r,t})\to\varphi_{\nu}(R,T)$ as $n\to\infty$.
\end{theorem}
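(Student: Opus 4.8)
The plan is to reuse the machinery behind Theorems~4.1 and~4.2 of~\cite{JedKatSch2013} and to exploit the fact that $I_n+\nu J_n$ enters the merit factor formula linearly. Since $\norm{f_{r,t}}_2^2=t$, we have $1/F(f_{r,t})=\norm{f_{r,t}}_4^4/t^2-1$, so it suffices to analyse $\norm{f_{r,t}}_4^4$. Expanding each coefficient of $f_{r,t}$ by the inverse discrete Fourier transform $a_{j+r}=\frac{1}{n}\sum_k f(\e_k)\e_k^{-(j+r)}$, writing $\norm{f_{r,t}}_4^4=\frac{1}{2\pi}\int_0^{2\pi}\abs{f_{r,t}(e^{i\phi})}^4\,d\phi$, and integrating out $\phi$, the four Fourier expansions collapse (after the substitution $(k_1,k_2,k_3,k_4)=(k,k+a,k+b,k+c)$ and summation over $k$) to an identity of the form
\[
\frac{\norm{f_{r,t}}_4^4}{t^2}=\sums{a,b,c\in\Z/n\Z}\Phi_{r,t}(a,b,c)\,L_f(a,b,c),
\]
where the kernel $\Phi_{r,t}(a,b,c)=\frac{1}{nt^2}\,W(a,b,c)\,e^{2\pi ir(b+c-a)/n}$ is explicit: $W$ is a product-type sum of triangular (Fej\'er) weights coming from the truncation to $t$ terms, and the exponential records the shift by $r$. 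This is exactly the identity underlying~\cite{JedKatSch2013}, so I would quote it rather than rederive it.

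With this identity in hand I would write $L_f=(I_n+\nu J_n)+E_n$, where $E_n=L_f-(I_n+\nu J_n)$, and split the sum accordingly. The error contribution is at most
\[
\Big(\max_{a,b,c}\bigabs{E_n(a,b,c)}\Big)\sums{a,b,c\in\Z/n\Z}\bigabs{\Phi_{r,t}(a,b,c)}.
\]
The decisive estimate, taken from~\cite{JedKatSch2013}, is that the total mass of the kernel satisfies $\sum_{a,b,c}\abs{\Phi_{r,t}(a,b,c)}=O((\log n)^3)$, a ``Lebesgue constant cubed'' phenomenon in which the three difference directions of $W$ each contribute a factor of order $\log n$. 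This is precisely where the $(\log n)^3$ in the hypothesis originates, and it forces the error contribution to tend to $0$.

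It then remains to evaluate the main term $\sum_{a,b,c}\Phi_{r,t}(a,b,c)(I_n+\nu J_n)(a,b,c)$, and here linearity does the work. The sum splits into an $I_n$-part plus $\nu$ times a $J_n$-part, and since both $I_n$ and $J_n$ are supported on one-parameter families of triples, each part collapses to a single sum over $\Z/n\Z$. On the support of $I_n$ the shift phase is trivial, and summing $W$ over the free variable retains only its aliased diagonal contributions at multiples of $n$; as $r/n\to R$ and $t/n\to T$ these converge to $1+1/\varphi_0(R,T)$. On the support of $J_n$ (where $a=0$ and $b=c\neq0$) the phase becomes $e^{4\pi irb/n}$, and the same summation now produces a two-sided series whose argument shifts by $2r/n\to 2R$; in the limit this gives $-\tfrac{2T}{3}+\sum_{m\in\Z}\max(0,1-\abs{1+(2R-m)/T})^2$, which is exactly $1/\varphi_1(R,T)-1/\varphi_0(R,T)$. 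Because $1+1/\varphi_\nu$ is affine in $\nu$ with $1+1/\varphi_0$ and $1/\varphi_1-1/\varphi_0$ as its constant and linear coefficients, the two contributions combine to $1+1/\varphi_\nu(R,T)$, whence $1/F(f_{r,t})\to1/\varphi_\nu(R,T)$ and therefore $F(f_{r,t})\to\varphi_\nu(R,T)$.

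The work here is essentially bookkeeping, since the extreme cases $\nu=1$ and $\nu=0$ are already settled in~\cite{JedKatSch2013}: the main point is that replacing the coefficient $1$ of $J_n$ by a general real $\nu$ passes unchanged through both the error estimate and the limit, which it does precisely because the decomposition $I_n+\nu J_n$ is linear and $1+1/\varphi_\nu$ depends affinely on $\nu$. The one genuinely delicate step, and the place I expect to need the most care, is the $J_n$-part: one must verify that the shift phases $e^{4\pi irb/n}$ reproduce exactly the $R$-dependent two-sided sum with the $2R$ offset, uniformly as $r/n\to R$. This computation is, however, identical in substance to the corresponding one for $\nu=1$ in~\cite{JedKatSch2013}, so the general statement follows by the same argument.
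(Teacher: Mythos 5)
Your proposal is correct and follows essentially the same route as the paper, which itself only sketches this theorem by deferring to straightforward modifications of the proof of Theorem~4.1 of~\cite{JedKatSch2013} and carries out the analogous argument explicitly for its Theorem~\ref{thm:mf_from_L_2}: the same identity expressing $1/F(f_{r,t})$ as a kernel sum against $L_f$, the same decomposition into $I_n+\nu J_n$ plus an error controlled by the $(\log n)^3$ kernel-mass bound, and the same evaluation of the main terms. Your key observation, that the $J_n$-contribution enters linearly and $1/\varphi_\nu$ is affine in $\nu$ with $1/\varphi_0$ and $1/\varphi_1-1/\varphi_0$ as coefficients, is exactly the ``straightforward modification'' the paper has in mind.
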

\par
There is a similar generalisation of parts (ii) and (iii) of Theorems~4.1 and~4.2 in~\cite{JedKatSch2013}, which we do not consider in this paper.
\par
Our second theorem is a more subtle modification of Theorem~4.1~(i) in~\cite{JedKatSch2013}. We include a proof that highlights the required modifications of the proof of \cite[Theorem~4.1~(i)]{JedKatSch2013}.
\begin{theorem}
\label{thm:mf_from_L_2}
Let $n$ take values in an infinite set of even positive integers. For each $n$, let $f$ be a Littlewood polynomial of degree $n-1$ and suppose that, as $n\to\infty$,
\begin{equation}
(\log n)^3\max\limits_{a,b,c\in\Z/n\Z}\bigabs{L_f(a,b,c)-(I_n(a,b,c)+ K_n(a,b,c))}\to 0.   \label{eqn:assumption_LIK}
\end{equation}
Let $T>0$ be real. If $t/n\to T$, then $F(f_{r,t})\to \varphi_0(0,T)$ as $n\to\infty$.
\end{theorem}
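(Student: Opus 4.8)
The plan is to follow the proof of \cite[Theorem~4.1~(i)]{JedKatSch2013} and to pin down the single place where the term $K_n$ behaves differently from $J_n$. The backbone is an exact identity for the windowed fourth moment: writing $a_{j+r}=\frac1n\sum_{k}f(\e_k)\e_k^{-(j+r)}$, substituting into $f_{r,t}$, and integrating $\abs{f_{r,t}}^4$ over the circle expresses $\norm{f_{r,t}}_4^4$ as a weighted sum of the correlations $L_f$:
\[
\norm{f_{r,t}}_4^4=\sum_{a,b,c}L_f(a,b,c)\,W(a,b,c),\qquad W(a,b,c)=\tfrac1n\,e^{2\pi ir(b+c-a)/n}\,\mathcal I(a,b,c),
\]
where $\mathcal I(a,b,c)=\sum e^{2\pi i(-j_2a+j_3b+j_4c)/n}$ is summed over $0\le j_1,j_2,j_3,j_4\le t-1$ subject to $j_1+j_2=j_3+j_4$. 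Since $f_{r,t}$ is Littlewood we have $\norm{f_{r,t}}_2^2=t$, hence $1/F(f_{r,t})=\norm{f_{r,t}}_4^4/t^2-1$, and the whole problem becomes the asymptotic evaluation of $\frac1{t^2}\sum_{a,b,c}L_f(a,b,c)W(a,b,c)$.

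First I would split $L_f=I_n+K_n+(L_f-I_n-K_n)$ and treat the three sums separately. The $I_n$ part is evaluated exactly as in the case $\nu=0$ of Theorem~\ref{thm:mf_from_L_1}: on the support of $I_n$ the phase $e^{2\pi ir(b+c-a)/n}$ is trivial, so there is no dependence on $r$ (which is why the present hypothesis needs no condition on $r/n$), and summing the free index produces the triangle weights $\max(0,1-\abs{m}/T)^2$ coming from periodic wraparound, together with the $-\frac{2T}{3}$ contribution of the diagonal point $(0,0,0)$; after subtracting the $1$ this converges to $1-\frac{2T}{3}+4\sum_{m\in\N}\max(0,1-m/T)^2=1/\varphi_0(0,T)$. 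The remainder is bounded by
\[
\frac1{t^2}\biggabs{\sum_{a,b,c}(L_f-I_n-K_n)\,W}\le\max_{a,b,c}\bigabs{L_f-I_n-K_n}\cdot\frac1{t^2}\sum_{a,b,c}\abs{W},
\]
and the estimate $\frac1{t^2}\sum_{a,b,c}\abs{W}=O((\log n)^3)$ (via the Lebesgue constants of the Dirichlet kernel, which is exactly what puts the factor $(\log n)^3$ into the hypotheses of both Theorem~\ref{thm:mf_from_L_1} and Theorem~\ref{thm:mf_from_L_2}), combined with assumption~\eqref{eqn:assumption_LIK}, forces this to vanish.

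The heart of the matter, and the one genuinely new computation, is the $K_n$ sum. On its support $a=n/2$ and $b=c+n/2$, so $b+c-a\equiv 2c\pmod n$ and the two half-period shifts inject a factor $(-1)^{j_2+j_3}$ into $\mathcal I$. Grouping by the level $s=j_1+j_2=j_3+j_4$ gives $\mathcal I(n/2,c+n/2,c)=\sum_s g(s)^2e^{2\pi ics/n}$, where $g(s)=\sum_j(-1)^j$ is the \emph{alternating} analogue of the representation count $N(s)=\#\{(j_1,j_2):j_1+j_2=s\}$; summing the geometric series in $c$ then leaves $\sum_{s\equiv-2r}g(s)^2$ up to an $O(t/n)$ correction. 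This is precisely where the behaviour departs from $J_n$: for $J_n$ (at $a=0$, $b=c$) the inner sum carries no sign, the level weight is $N(s)^2=\Theta(t^2)$, and $\frac1{t^2}\sum_{s\equiv-2r}N(s)^2$ converges to the nonzero term $\sum_{m\in\Z}\max(0,1-\abs{1+(2R-m)/T})^2$ responsible for the $\nu$-part of $1/\varphi_\nu$ (with $R=\lim r/n$). With the alternating signs present, however, $g(s)\in\{-1,0,1\}$, so $g(s)^2\le 1$; as only $O(T)$ values of $s$ satisfy $s\equiv-2r$, the entire $K_n$ sum is $O(1)$, and dividing by $t^2\to\infty$ annihilates it.

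I expect the main obstacle to be the bookkeeping in this last step: verifying that the two half-period shifts really do produce the sign $(-1)^{j_2+j_3}$ with no surviving $r$-dependence (so that the collapse from $N(s)^2$ to $g(s)^2\le1$ is genuine), and checking the total-weight bound $\frac1{t^2}\sum\abs{W}=O((\log n)^3)$ carefully enough that it absorbs the $K_n$ points, where $L_f$ is \emph{not} small but close to $1$. Granting these, the three pieces combine to give $1/F(f_{r,t})\to 1/\varphi_0(0,T)$, and therefore $F(f_{r,t})\to\varphi_0(0,T)$ as $n\to\infty$.
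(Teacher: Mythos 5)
Your proposal follows the paper's proof essentially step for step: the same identity expressing $1/F(f_{r,t})$ through $L_f$, the same decomposition $L_f=I_n+K_n+(\text{error})$ with the $I_n$ part and the error handled exactly as in the $\nu=0$ case of Theorem~\ref{thm:mf_from_L_1}, and the same key observation that on the support of $K_n$ the half-period shifts introduce an alternating sign $(-1)^{j_3-j_2}$, collapsing the level weights from $\Theta(t^2)$ to at most $1$ so that the $K_n$ contribution (and the inclusion--exclusion corrections at $(0,0,0)$, $(n/2,n/2,0)$, $(n/2,0,n/2)$) is $O(1/(tn))\to 0$. The one point you flag as needing care does check out: the $r$-dependence survives only in the congruence $j_3+j_4\equiv -2r\pmod n$ selecting the levels, which is harmless since the alternating sums are bounded by $1$ uniformly in $r$.
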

\begin{proof}
The first part of the proof is identical to that of~\cite[Theorem~4.1~(i)]{JedKatSch2013}, giving
\begin{equation}
\frac{1}{F(f_{r,t})}=-1+\frac{1}{t^2n}\;\sums{0\le j_1,j_2,j_3,j_4<t \\ j_1+j_2=j_3+j_4}\;\sum_{a,b,c\in\Z/n\Z}\;L_f(a,b,c)\,\e_a^{-j_2-r}\e_b^{j_3+r}\e_c^{j_4+r},   \label{eqn:F_from_L}
\end{equation}
where $\e_k=e^{2\pi ik/n}$. Write
\begin{equation}
L_f(a,b,c)=I_n(a,b,c)+K_n(a,b,c)+M_n(a,b,c),   \label{eqn:IplusK}
\end{equation}
where $M_n(a,b,c)$ is an error term, which can be controlled using~\eqref{eqn:assumption_LIK}. Consider three cases for the tuple $(a,b,c)\in\Z/n\Z$: (1) $c=a$ and $b=0$, (2) $a=b$ and $c=0$, and (3) $b=c+n/2$ and $a=n/2$. Then $I_n(a,b,c)+K_n(a,b,c)$ equals $1$ if at least one of these conditions is satisfied and $I_n(a,b,c)+K_n(a,b,c)$ equals $0$ otherwise. There are exactly three tuples $(a,b,c)$ that satisfy more than one of these conditions, namely $(0,0,0)$, $(n/2,n/2,0)$, and $(n/2,0,n/2)$. 
\par
We now substitute~\eqref{eqn:IplusK} into~\eqref{eqn:F_from_L} and break the sum involving $I_n(a,b,c)+K_n(a,b,c)$ into six parts: three sums corresponding to the three cases and three sums to correct for the double counting of $(0,0,0)$, $(n/2,n/2,0)$, and $(n/2,0,n/2)$. Noting that the sums arising in cases (1) and (2) have the same value, as have the sums arising for the compensation of the double count of $(n/2,n/2,0)$ and $(n/2,0,n/2)$, we obtain
\[
\frac{1}{F(f_{r,t})}=-1+A+B+C-D_1-D_2-D_3+E,
\]
where
\begin{align*}
A=B&=\frac{1}{t^2n}\sums{0 \le j_1,j_2,j_3,j_4 < t \\ j_1+j_2=j_3+j_4}\;\sum_{b\in\Z/n\Z} \e_b^{j_3-j_2}, \\
C&=\frac{1}{t^2n}\sums{0 \le j_1,j_2,j_3,j_4 < t \\ j_1+j_2=j_3+j_4} (-1)^{j_3-j_2}\sum_{c\in\Z/n\Z} \e_c^{j_3+j_4+2 r}, \\
D_1&=\frac{1}{t^2n}\sums{0 \le j_1,j_2,j_3,j_4 < t \\ j_1+j_2=j_3+j_4} 1,\\
D_2=D_3&=\frac{1}{t^2n}\sums{0 \le j_1,j_2,j_3,j_4 < t \\ j_1+j_2=j_3+j_4} (-1)^{j_3-j_2},\\
E&=\frac{1}{t^2n}\sum_{a,b,c\,\in\,\Z/n\Z}M_n(a,b,c)\,\e_{-a+b+c}^r\,\sums{0 \le j_1,j_2,j_3,j_4 < t \\ j_1+j_2=j_3+j_4}\e_a^{-j_2}\e_b^{j_3}\e_c^{j_4}.
\end{align*}
As in the proof of~\cite[Theorem~4.2]{JedKatSch2013}, we have $-1+A+B-D_1+E\to 1/\varphi_0(0,T)$ if $t/n\to T$. Hence it remains to show that $C-D_2-D_3\to 0$ if $t/n\to T$. 
\par
Since there are contributions to the first sum in $C$ only when $j_3+j_4=mn-2r$ for some $m\in\Z$, we obtain
\[
C=\frac{1}{t^2}\sum_{m\in\Z}\Bigg(\sum_{0\le j,mn-2r-j<t}(-1)^j\Bigg)^2.
\]
Therefore we have $\abs{C}\le 1/(tn)$ and so $C\to 0$ if $t/n\to T$. By writing $j_3=j_1+m$ for some $m\in\Z$ we find that
\[
D_2=\frac{1}{t^2n}\sum_{m\in\Z}\Bigg(\sum_{0\le j,j+m<t}(-1)^j\Bigg)^2.
\]
Hence $\abs{D_2}\le 1/(tn)$ and therefore $D_2+D_3\to0$ if $t/n\to T$. This completes the proof.
\end{proof}


\section{Some background on Gauss and Jacobi sums}

Let $\chi$ be a multiplicative character of $\F_q$. Throughout this paper, we use the common convention
\[
\chi(0)=\begin{cases}
0 & \text{if $\chi$ is nontrivial}\\
1 & \text{if $\chi$ is trivial}.
\end{cases}
\]
We define the \emph{canonical Gauss sum} of $\chi$ to be
\begin{equation}
G(\chi)=\sum_{y\in\F_q^*}\chi(y)\,e^{2\pi i\Tr_{q,p}(y)/p},   \label{eqn:def_gauss}
\end{equation}
where $p$ is the characteristic of $\F_q$ and $\Tr_{q,p}$ is the trace from $\F_q$ to $\F_p$. Below we summarise some basic facts about such Gauss sums (see~\cite[Chapter~5]{LidNie1997} or~\cite[Chapter~1]{BerEvaWil1998}, for example).
\begin{lemma}
Let $\chi$ be a multiplicative character of $\F_q$. Then the following hold.
\begin{enumerate}[(i)]
\label{lem:gauss}
\setlength{\itemsep}{1ex}
\item $G(\chi)=-1$ if $\chi$ is trivial.
\item $\abs{G(\chi)}=\sqrt{q}$ if $\chi$ is nontrivial.
\item $G(\chi)G(\overline{\chi})=\chi(-1)\,q$.
\end{enumerate}
\end{lemma}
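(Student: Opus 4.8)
The plan is to reduce all three parts to two orthogonality relations. Write $\psi(y)=e^{2\pi i\Tr_{q,p}(y)/p}$ for the canonical additive character of $\F_q$, so that $G(\chi)=\sum_{y\in\F_q^*}\chi(y)\,\psi(y)$. Since $\Tr_{q,p}$ is a surjective $\F_p$-linear map, $\psi$ is a \emph{nontrivial} character of the additive group of $\F_q$; hence $\sum_{y\in\F_q}\psi(y)=0$, which I record in the form
\[
\sum_{y\in\F_q^*}\psi(cy)=\begin{cases} q-1 & \text{if $c=0$,}\\ -1 & \text{if $c\neq0$,}\end{cases}
\]
for $c\in\F_q$. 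I shall also use that $\sum_{y\in\F_q^*}\chi(y)=0$ whenever $\chi$ is nontrivial. Part (i) is then immediate: if $\chi$ is trivial, then $\chi\equiv1$ on $\F_q^*$, so $G(\chi)=\sum_{y\in\F_q^*}\psi(y)=-1$ by the displayed identity with $c=1$.

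For part (iii), which I take for nontrivial $\chi$ (the only case it is used; for trivial $\chi$ both Gauss sums are already pinned down by (i)), I would first expand
\[
G(\chi)\,G(\overline\chi)=\sum_{x,y\in\F_q^*}\chi(x)\,\overline\chi(y)\,\psi(x+y).
\]
The key manoeuvre is the substitution $x=ty$ with $t\in\F_q^*$: for fixed $y$ the map $x\mapsto t=x/y$ is a bijection of $\F_q^*$, and $\chi(x)\overline\chi(y)=\chi(t)$ since $\chi(y)\overline\chi(y)=1$. The double sum becomes $\sum_{t\in\F_q^*}\chi(t)\sum_{y\in\F_q^*}\psi\bigl((t+1)y\bigr)$. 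Applying the additive identity to the inner sum, the index $t=-1$ contributes $\chi(-1)(q-1)$ while each other $t$ contributes $-\chi(t)$; summing the latter over $t\neq-1$ and using $\sum_{t\in\F_q^*}\chi(t)=0$ (so $\sum_{t\neq-1}\chi(t)=-\chi(-1)$) collapses everything to $\chi(-1)(q-1)+\chi(-1)=\chi(-1)\,q$.

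For part (ii) I would avoid recomputing and instead deduce it from (iii). Conjugating $G(\chi)$ and substituting $y\mapsto-y$ gives $\overline{G(\chi)}=\sum_{y}\overline\chi(y)\,\psi(-y)=\overline\chi(-1)\,G(\overline\chi)=\chi(-1)\,G(\overline\chi)$, where I use that $\chi(-1)=\pm1$ is real. Hence $\abs{G(\chi)}^2=G(\chi)\,\overline{G(\chi)}=\chi(-1)\,G(\chi)G(\overline\chi)=\chi(-1)^2q=q$, giving $\abs{G(\chi)}=\sqrt q$. The argument is entirely standard and presents no serious obstacle; the only place needing care is the bookkeeping in (iii), where one must correctly isolate the single exceptional index $t=-1$ at which the additive sum jumps from $-1$ to $q-1$, and then recognise that the leftover $-\chi(t)$ terms reassemble, through multiplicative orthogonality, into precisely the $+\chi(-1)$ that promotes $\chi(-1)(q-1)$ to $\chi(-1)q$.
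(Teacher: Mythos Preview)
Your argument is correct and is the standard textbook derivation of these identities. The paper, however, does not prove this lemma at all: it states the three facts as well-known and refers the reader to \cite[Chapter~5]{LidNie1997} and \cite[Chapter~1]{BerEvaWil1998}. So there is no ``paper's proof'' to compare against; you have simply supplied the classical proof that those references contain, namely orthogonality of the additive character for (i), the substitution $x=ty$ together with additive and multiplicative orthogonality for (iii), and the relation $\overline{G(\chi)}=\chi(-1)G(\overline\chi)$ combined with (iii) for (ii). One minor remark: as stated in the paper, (iii) is asserted for all $\chi$, whereas you prove it only for nontrivial $\chi$; for trivial $\chi$ the identity $G(\chi)G(\overline\chi)=(-1)(-1)=1\ne q$ actually fails, so the paper's statement of (iii) tacitly presumes $\chi$ nontrivial, and your restriction is the right one.
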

\par
We also require the following deep result due to Katz~\cite[pp.~161--162]{Kat1988}.
\begin{lemma}
\label{lem:sum_of_gauss_sums}
Let $\alpha_1,\dots,\alpha_r,\beta_1,\dots,\beta_s$ be multiplicative characters of $\F_q$ such that $\alpha_1,\dots,\alpha_r$ do not arise by permuting $\beta_1,\dots,\beta_s$. Then
\begin{align*}
\Biggabs{
\sum_{\chi}G(\chi\alpha_1)\cdots\;
G(\chi\alpha_r)\overline{G(\chi\beta_1)}\cdots\;\overline{G(\chi\beta_s)}}\le \max(r,s)\,q^{(r+s+1)/2},
\end{align*}
where the sum runs over all multiplicative characters $\chi$ of $\F_q$.
\end{lemma}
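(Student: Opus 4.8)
The plan is to reduce the sum over characters to a single twisted exponential sum over a hypersurface in a torus, and then to invoke the Weil--Deligne bounds; the reduction is elementary, but the cancellation estimate at the end is precisely the deep input and cannot be avoided.

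First I would expand each Gauss sum using the definition~\eqref{eqn:def_gauss}. Writing $\psi(x)=e^{2\pi i\Tr_{q,p}(x)/p}$ for the canonical additive character, we have $G(\chi\alpha_i)=\sum_{y_i\in\F_q^*}(\chi\alpha_i)(y_i)\,\psi(y_i)$ and $\overline{G(\chi\beta_j)}=\sum_{z_j\in\F_q^*}\overline{(\chi\beta_j)}(z_j)\,\psi(-z_j)$, these identities being valid even when $\chi\alpha_i$ or $\chi\beta_j$ is trivial, in view of Lemma~\ref{lem:gauss}~(i). Substituting and interchanging the order of summation isolates the inner sum $\sum_\chi\chi(y_1\cdots y_r\,(z_1\cdots z_s)^{-1})$, which by orthogonality of the multiplicative characters of $\F_q$ equals $q-1$ when $y_1\cdots y_r=z_1\cdots z_s$ and vanishes otherwise. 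Hence the whole expression equals
\[
(q-1)\sum_{\substack{y_1,\dots,y_r,z_1,\dots,z_s\in\F_q^*\\ y_1\cdots y_r=z_1\cdots z_s}}\alpha_1(y_1)\cdots\alpha_r(y_r)\,\overline{\beta_1}(z_1)\cdots\overline{\beta_s}(z_s)\,\psi\Big(\sum_i y_i-\sum_j z_j\Big).
\]
Everything thus comes down to bounding a sum over the hypersurface $H:\;y_1\cdots y_r=z_1\cdots z_s$ in $\mathbb{G}_m^{r+s}$, twisted by a product of multiplicative (Kummer) characters in the coordinates and by the additive (Artin--Schreier) character of the linear form $\sum_i y_i-\sum_j z_j$.

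Next I would argue that the hypothesis---that $\alpha_1,\dots,\alpha_r$ is not a permutation of $\beta_1,\dots,\beta_s$---is exactly what forces this twisted sum to exhibit full square-root cancellation rather than to carry a main term. A main term would arise precisely when the multiplicative twist becomes trivial along the "diagonal" directions of $H$, which happens exactly when the multisets $\{\alpha_i\}$ and $\{\beta_j\}$ coincide (forcing $r=s$); in that degenerate case the sum is essentially $\sum_\chi\prod_i\abs{G(\chi\alpha_i)}^2\approx q^{r+1}$ by Lemma~\ref{lem:gauss}~(ii), larger than the claimed bound by a factor $\sqrt q$. Once this is excluded, the associated $\ell$-adic sheaf on $H$ has no geometrically constant constituent, so its top-degree cohomology vanishes and the leading term is suppressed. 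Since $H$ has dimension $r+s-1$, square-root cancellation yields a hypersurface-sum bound of the shape $C\,q^{(r+s-1)/2}$, and multiplying by $q-1<q$ produces $C\,q^{(r+s+1)/2}$; this matches the target once the implied constant is taken to be $\max(r,s)$, the relevant bound on the sum of Betti numbers (the rank of the cohomology).

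The hard part is, unavoidably, this last cancellation estimate. A naive estimate gives only $q^{(r+s+2)/2}$, since each of the $\approx q$ Gauss-sum products has modulus $q^{(r+s)/2}$ by Lemma~\ref{lem:gauss}~(ii), and extracting the extra saving of $\sqrt q$ requires genuine cancellation among the characters $\chi$. An attempt to induct on $r+s$ by collapsing pairs of Gauss sums through the Jacobi-sum relation $G(\lambda)G(\mu)=J(\lambda,\mu)\,G(\lambda\mu)$ founders, because the Jacobi factor depends on $\chi$ and does not decouple from the character sum. The estimate therefore rests on Deligne's form of the Weil conjectures together with Katz's monodromy computations for these specific exponential sums, which is exactly the content of~\cite[pp.~161--162]{Kat1988}; I would cite that result to close the argument rather than reproduce the cohomological machinery here.
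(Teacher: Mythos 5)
The paper offers no proof of this lemma at all---it is quoted as a deep result of Katz and simply cited to~\cite[pp.~161--162]{Kat1988}---and your argument, after a correct (if inessential) unwinding of the Gauss sums via orthogonality of multiplicative characters into an exponential sum over the hypersurface $y_1\cdots y_r=z_1\cdots z_s$, closes the only hard step by citing exactly the same source. So this is essentially the paper's approach: in both cases the substance is the black-box appeal to Katz's cohomological bound.
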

\par
Now let $\psi$ and $\chi$ be multiplicative characters of $\F_q$. The \emph{Jacobi sum} corresponding to $\psi$ and $\chi$ is defined to be
\[
J(\psi,\chi)=\sum_{y\in\F_q}\psi(y)\chi(1-y).
\]
Below we summarise some basic facts about Jacobi sums (see~\cite[Chapter~5]{LidNie1997} or~\cite[Chapter~2]{BerEvaWil1998}, for example).
\begin{lemma}
\label{lem:jacobi}
Let $\psi$ and $\chi$ be multiplicative characters of $\F_q$. Then the following hold.
\begin{enumerate}[(i)]
\setlength{\itemsep}{1ex}
\item $J(\psi,\chi)=0$ if exactly one of $\psi$ or $\chi$ is trivial.
\item $\abs{J(\psi,\chi)}=1$ if $\psi$ and $\chi$ are nontrivial, but $\psi\chi$ is trivial.
\item $\abs{J(\psi,\chi)}=\sqrt{q}$ if all of $\psi$, $\chi$, and $\psi\chi$ are nontrivial.
\item $J(\psi,\chi)\,q=G(\psi)G(\chi)\overline{G(\psi\chi)}$ if $\psi$ and $\chi$ are nontrivial.
\item $J(\psi,\chi)J(\overline{\psi},\chi\psi)=\psi(-1)q$ if $\psi$ and $\chi$ are nontrivial.
\end{enumerate}
\end{lemma}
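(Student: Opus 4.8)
The five parts are standard consequences of the definitions together with Lemma~\ref{lem:gauss}; the engine is part~(iv), from which parts~(ii), (iii), and~(v) then follow by manipulating Gauss sums. Throughout write $\eta(w)=e^{2\pi i\Tr_{q,p}(w)/p}$ for the canonical additive character, so that $G(\chi)=\sum_{y\in\F_q^*}\chi(y)\,\eta(y)$; note that $\eta$ is nontrivial, so $\sum_{z\in\F_q}\eta(z)=0$. Part~(i) is immediate from orthogonality of multiplicative characters: if $\psi$ is trivial and $\chi$ is nontrivial, then, using the convention $\psi(0)=1$, we have $J(\psi,\chi)=\sum_{y\in\F_q}\chi(1-y)=\sum_{z\in\F_q}\chi(z)=0$ (recall $\chi(0)=0$), and the case $\chi$ trivial, $\psi$ nontrivial is symmetric.

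The heart of the proof is to extract $J(\psi,\chi)$ from the product $G(\psi)G(\chi)$ when $\psi$ and $\chi$ are nontrivial. Expanding,
\[
G(\psi)G(\chi)=\sum_{x,y\in\F_q^*}\psi(x)\chi(y)\,\eta(x+y),
\]
I would collect terms according to the value $z=x+y$. For $z\neq 0$ the substitution $x=zt$, $y=z(1-t)$ with $t\neq 0,1$ factors the inner sum as $(\psi\chi)(z)\,J(\psi,\chi)$, the two boundary terms $t\in\{0,1\}$ vanishing because $\psi(0)=\chi(0)=0$. The diagonal $z=0$ contributes $\chi(-1)\sum_{x\in\F_q^*}(\psi\chi)(x)$, which is $0$ when $\psi\chi$ is nontrivial and $\chi(-1)(q-1)$ when $\psi\chi$ is trivial. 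Hence, when $\psi\chi$ is nontrivial,
\[
G(\psi)G(\chi)=J(\psi,\chi)\sum_{z\in\F_q^*}(\psi\chi)(z)\,\eta(z)=J(\psi,\chi)\,G(\psi\chi).
\]
Multiplying by $\overline{G(\psi\chi)}$ and using $G(\psi\chi)\overline{G(\psi\chi)}=\abs{G(\psi\chi)}^2=q$ from Lemma~\ref{lem:gauss}(ii) yields part~(iv) in this case. This reorganisation is the only genuine computation and the main obstacle; everything else is bookkeeping.

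It remains to treat the degenerate product and to deduce the remaining parts. When $\psi\chi$ is trivial (so $\chi=\overline{\psi}$), the same collection gives $G(\psi)G(\overline\psi)=-J(\psi,\overline\psi)+\psi(-1)(q-1)$, since $\sum_{z\in\F_q^*}\eta(z)=-1$ and $\chi(-1)=\psi(-1)$; comparing with $G(\psi)G(\overline\psi)=\psi(-1)q$ from Lemma~\ref{lem:gauss}(iii) gives $J(\psi,\overline\psi)=-\psi(-1)$, which proves part~(ii), and, since $G(\psi\chi)=-1$ by Lemma~\ref{lem:gauss}(i), also confirms part~(iv) in this case. Part~(iii) follows from~(iv) by taking absolute values: with $\psi,\chi,\psi\chi$ all nontrivial, Lemma~\ref{lem:gauss}(ii) gives $\abs{J(\psi,\chi)}\,q=q^{3/2}$, so $\abs{J(\psi,\chi)}=\sqrt{q}$. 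Finally, for part~(v) with $\psi,\chi,\psi\chi$ all nontrivial (so that the second factor is nondegenerate, its two characters $\overline\psi$ and $\psi\chi$ having product $\chi$), I would apply~(iv) to both $J(\psi,\chi)$ and $J(\overline\psi,\psi\chi)$, multiply, and collapse using $G(\chi)\overline{G(\chi)}=G(\psi\chi)\overline{G(\psi\chi)}=q$ together with $G(\psi)G(\overline\psi)=\psi(-1)q$; the factor $q^2$ produced by the two applications of~(iv) cancels, leaving $J(\psi,\chi)J(\overline\psi,\psi\chi)=\psi(-1)q$.
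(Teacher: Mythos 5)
The paper does not prove this lemma at all---it simply cites \cite[Chapter~5]{LidNie1997} and \cite[Chapter~2]{BerEvaWil1998}---and your proposal is exactly the standard derivation found in those references: the change of variables $x=zt$, $y=z(1-t)$ in $G(\psi)G(\chi)$ yielding $G(\psi)G(\chi)=J(\psi,\chi)G(\psi\chi)$ when $\psi\chi$ is nontrivial, the separate bookkeeping of the diagonal $z=0$ when $\psi\chi$ is trivial, and the deduction of (ii), (iii), (v) from (iv) via Lemma~\ref{lem:gauss}. All of these steps check out, including your verification that (iv) survives the degenerate case via $G(\psi\chi)=-1$ and $J(\psi,\overline\psi)=-\psi(-1)$. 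One point deserves an explicit remark rather than a parenthesis: in part (v) you restrict to $\psi\chi$ nontrivial, and this restriction is not cosmetic---when $\psi\chi$ is trivial the left-hand side is $J(\psi,\overline\psi)\,J(\overline\psi,\epsilon)=0$ by part (i), while the right-hand side is $\psi(-1)q\ne 0$, so the identity as stated in the lemma actually requires the additional hypothesis that $\psi\chi$ be nontrivial. You should say so, since otherwise your proof appears to leave a case unhandled when in fact no proof could handle it. This omission is harmless for the paper: in the only place (v) is invoked (the Sidelnikov proposition), the finitely many characters $\chi$ for which some product is trivial are precisely the exceptional terms absorbed into the slack between $(q-5)q^2$ and $(q-2)q^2$.
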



\section{Gordon-Mills-Welch difference sets}
\label{sec:gmw}

In this section we prove Theorem~\ref{thm:gmw}. If $D$ is a subset of a finite abelian group $G$ and $\chi$ is a character of $G$, we write
\[
\chi(D)=\sum_{d\in D}\chi(d).
\]
The following lemma is a standard (and easily verified).
\begin{lemma}
\label{lem:character_values_diff_set}
A $k$-subset $D$ of a finite abelian group $G$ of order $n$ is a difference set if and only if
\[
\abs{\chi(D)}^2=\frac{k(n-k)}{n-1}
\]
for all nontrivial characters $\chi$ of $G$.
\end{lemma}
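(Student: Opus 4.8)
The plan is to translate the difference-set condition into a single identity in the group ring $\C[G]$ and then exploit the fact that the characters of $G$ separate elements of $\C[G]$. Write $k=\abs{D}$ and, abusing notation, let $D=\sum_{d\in D}d$, let $D^{(-1)}=\sum_{d\in D}d^{-1}$ denote the corresponding elements of $\C[G]$, and let $G=\sum_{g\in G}g$. For each nonzero $x\in G$, the coefficient of $x$ in the product $DD^{(-1)}$ counts the ordered pairs $(d,d')\in D\times D$ with $d(d')^{-1}=x$, which is exactly the number of representations of $x$ as a difference of elements of $D$; the coefficient of the identity $e$ is $k$. Hence $D$ is a difference set with parameter $\lambda$ if and only if
\[
DD^{(-1)}=(k-\lambda)\,e+\lambda\,G,
\]
where necessarily $k(k-1)=\lambda(n-1)$ on comparing the total number of ordered pairs.

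Next I would extend each character $\chi$ of $G$ to the ring homomorphism $\C[G]\to\C$ it induces, so that $\chi(DD^{(-1)})=\chi(D)\,\chi(D^{(-1)})=\abs{\chi(D)}^2$, using $\chi(d^{-1})=\overline{\chi(d)}$. The orthogonality facts required are $\chi(e)=1$ for every $\chi$ and $\chi(G)=0$ for every nontrivial $\chi$ (while $\chi(G)=n$ for the trivial character). Applying a nontrivial $\chi$ to the displayed identity therefore yields $\abs{\chi(D)}^2=k-\lambda$, and substituting $\lambda=k(k-1)/(n-1)$ gives $k-\lambda=k(n-k)/(n-1)$. This settles the forward direction and identifies the asserted value.

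For the converse I would argue by Fourier inversion: since the character table of $G$ is an invertible matrix, an element of $\C[G]$ is determined by its values under all characters of $G$. Assuming $\abs{\chi(D)}^2=k(n-k)/(n-1)$ for every nontrivial $\chi$, set $\lambda=k(k-1)/(n-1)$ and compare the character values of $DD^{(-1)}$ with those of $(k-\lambda)e+\lambda G$: the nontrivial characters agree by hypothesis (both give $k-\lambda$), and the trivial character gives $k^2$ on both sides because $k^2=k+\lambda(n-1)$. Hence the two group-ring elements coincide, so every nonzero $x$ has exactly $\lambda$ difference representations; as these counts are nonnegative integers, $\lambda$ is automatically an integer and $D$ is a difference set.

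This argument is entirely routine, so there is no serious obstacle; the only point requiring care is the converse, where the conclusion rests on the injectivity of the Fourier transform on $\C[G]$ — equivalently, on the fact that matching all character values forces equality of the underlying group-ring elements.
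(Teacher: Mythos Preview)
Your argument is correct and is precisely the standard group-ring/Fourier proof of this characterisation. The paper itself supplies no proof at all, merely stating that the lemma is ``standard (and easily verified)'', so there is nothing to compare against beyond noting that your write-up is the usual justification of that folklore claim.
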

\par
The following lemma gives the character values of Gordon-Mills-Welch difference sets and in particular gives an alternative proof of the main result of~\cite{GorMilWel1962} (although, for simplicity, we restrict $q$ to be even).
\begin{lemma}
\label{lem:character_values_gmw}
Let $q>2$ be a power of two and let $\F_s$ be a subfield of $\F_q$. Let~$A$ contain all elements $a\in\F_q$ with $\Tr_{q,s}(a)=1$, let $B$ be a subset of $\F_s^*$, and write $D=\{ab:a\in A,b\in B\}$. Let $\chi$ be a nontrivial character of $\F_q^*$ and let $\chi^*$ be its restriction to $\F_s^*$. Then
\[
\chi(D)=\begin{cases}
\dfrac{\chi^*(B)}{G(\chi^*)}G(\chi) & \text{for $\chi^*$ nontrivial}\\[3ex]
-\dfrac{\chi^*(B)}{s}G(\chi)        & \text{for $\chi^*$ trivial}.
\end{cases}
\]
In particular, if $B$ is a difference set in $\F_s^*$ and $\abs{B}=s/2$, then~$D$ is a difference set with parameters $(q-1,q/2,q/4)$.
\end{lemma}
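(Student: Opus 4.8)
The plan is to write $\chi(D)$ as a product of a sum over $B$ and a sum over $A$, and then to evaluate the latter as a ratio of Gauss sums. First I would check that the product map $A\times B\to D$, $(a,b)\mapsto ab$, is a bijection: if $ab=a'b'$ with $a,a'\in A$ and $b,b'\in B$, then $a=a'(b'/b)$ with $b'/b\in\F_s^*$, and the $\F_s$-linearity of the trace gives $1=\Tr_{q,s}(a)=(b'/b)\,\Tr_{q,s}(a')=b'/b$, forcing $b=b'$ and $a=a'$. Since every element of $D$ then has a unique representation $ab$ (with $ab\in\F_q^*$) and $\chi(ab)=\chi(a)\chi^*(b)$, this yields $\chi(D)=\chi^*(B)\,S$, where $S=\sum_{a\in A}\chi(a)$, so everything reduces to evaluating $S$.

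To evaluate $S$ I would detect the defining condition $\Tr_{q,s}(a)=1$ with additive characters. Writing $\mu$ and $\lambda$ for the canonical additive characters of $\F_s$ and $\F_q$, transitivity of the trace gives $\lambda(ca)=\mu(c\,\Tr_{q,s}(a))$ for $c\in\F_s$, and the orthogonality relation $\frac1s\sum_{c\in\F_s}\mu\big(c(\Tr_{q,s}(a)-1)\big)=\1[\Tr_{q,s}(a)=1]$ lets me rewrite
\[
S=\frac{1}{s}\sum_{c\in\F_s}\overline{\mu(c)}\sum_{a\in\F_q^*}\chi(a)\lambda(ca).
\]
The term $c=0$ vanishes because $\chi$ is nontrivial, and for $c\in\F_s^*$ the substitution $a\mapsto c^{-1}a$ turns the inner sum into $\overline{\chi^*(c)}\,G(\chi)$. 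Using that in characteristic two $\mu$ is real-valued (so $\overline{\mu(c)}=\mu(c)$), the surviving factor is $\sum_{c\in\F_s^*}\overline{\chi^*(c)}\mu(c)$, which equals $G(\overline{\chi^*})$ if $\chi^*$ is nontrivial and $-1$ if $\chi^*$ is trivial.

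The two stated cases then fall out. When $\chi^*$ is nontrivial, Lemma~\ref{lem:gauss}(iii) combined with $-1=1$ in $\F_q$ gives $G(\chi^*)G(\overline{\chi^*})=s$, so $S=G(\chi)/G(\chi^*)$ and $\chi(D)=\chi^*(B)G(\chi)/G(\chi^*)$; when $\chi^*$ is trivial, $S=-G(\chi)/s$ and $\chi(D)=-\chi^*(B)G(\chi)/s$. For the final assertion I would apply Lemma~\ref{lem:character_values_diff_set} in both directions. The product map gives $\abs{D}=\abs{A}\,\abs{B}=(q/s)\abs{B}=q/2$ (using that $\Tr_{q,s}$ is surjective), and since $B$ is a difference set in $\F_s^*$ with $\abs{B}=s/2$ the lemma gives $\abs{\chi^*(B)}^2=s/4$ for nontrivial $\chi^*$, while $\chi^*(B)=s/2$ for trivial $\chi^*$. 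Together with $\abs{G(\chi)}^2=q$ and $\abs{G(\chi^*)}^2=s$, both cases yield $\abs{\chi(D)}^2=q/4$ for every nontrivial character $\chi$ of $\F_q^*$; the converse half of Lemma~\ref{lem:character_values_diff_set} then identifies $D$ as a difference set with parameters $(q-1,q/2,q/4)$.

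The step I expect to require the most care is the evaluation of $S$: inserting the additive-character detector for the trace condition, performing the substitution that extracts $G(\chi)$, and—most importantly—using the characteristic-two facts $-1=1$ and the real-valuedness of $\mu$ to replace $G(\overline{\chi^*})$ by $s/G(\chi^*)$. The bijectivity of the product map is a small but essential preliminary, without which $\chi(D)$ would not factor, and the difference-set conclusion is then just a magnitude computation through Lemma~\ref{lem:character_values_diff_set}.
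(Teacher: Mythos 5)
Your proof is correct and uses the same decomposition as the paper's: both factor $\chi(D)=\chi^*(B)\cdot E(\chi)$, where $E(\chi)=\sum_{a:\,\Tr_{q,s}(a)=1}\chi(a)$ is an Eisenstein sum, so that everything reduces to evaluating $E(\chi)$. The difference is in how that evaluation is obtained: the paper simply quotes the known formula for $E(\chi)$ from Berndt--Evans--Williams, whereas you derive it from scratch by detecting the trace condition with additive characters of $\F_s$, extracting $G(\chi)$ via the substitution $a\mapsto c^{-1}a$, and converting the resulting factor $G(\overline{\chi^*})$ into $s/G(\chi^*)$ using Lemma~\ref{lem:gauss}(iii) together with $-1=1$ in characteristic two. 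Your computation checks out (including the facts that the canonical additive character of $\F_s$ is real-valued in characteristic two, that the $c=0$ term vanishes because $\chi$ is nontrivial, and that the trivial-$\chi^*$ case yields $\sum_{c\in\F_s^*}\mu(c)=-1$), so what you gain is a self-contained proof at the cost of an extra character-sum manipulation. You also explicitly verify that $(a,b)\mapsto ab$ is injective on $A\times B$ --- a point the paper's double-sum formula tacitly assumes --- and your magnitude computation for the difference-set conclusion, $\abs{\chi(D)}^2=q/4$ for all nontrivial $\chi$, matches the paper's appeal to Lemmas~\ref{lem:character_values_diff_set} and~\ref{lem:gauss}.
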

\begin{proof}
We have
\[
\chi(D)=\sums{a\in\F_q\\\Tr_{q,s}(a)=1}\sum_{b\in B}\chi(ab)=E(\chi)\,\chi^*(B),
\]
where
\[
E(\chi)=\sums{a\in\F_q\\\Tr_{q,s}(a)=1}\chi(a)
\]
is an Eisenstein sum. It is known~\cite[pp.~391/400]{BerEvaWil1998} that
\[
E(\chi)=\begin{cases}
G(\chi)/G(\chi^*) & \text{for $\chi^*$ nontrivial}\\[1ex]
-G(\chi)/s        & \text{for $\chi^*$ trivial},
\end{cases}
\]
which proves the first statement of the lemma. The second statement follows from Lemmas~\ref{lem:character_values_diff_set} and~\ref{lem:gauss}.
\end{proof}
\par
We now prove Theorem~\ref{thm:gmw} by combining Theorem~\ref{thm:mf_from_L_1} with the following result.
\begin{proposition}
\label{pro:L_gmw}
Let $q>2$ be a power of two and let $f$ be a characteristic polynomial of a Gordon-Mills-Welch difference set in $\F_q^*$. Then
\[
\bigabs{L_f(a,b,c)-I_{q-1}(a,b,c)}\le\frac{2q^{5/2}}{(q-1)^3}
\]
for all $a,b,c\in\Z/(q-1)\Z$.
\end{proposition}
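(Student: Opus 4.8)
The plan is to move to the frequency domain, where the difference-set structure forces every nonzero Fourier coefficient of $f$ to have the same modulus, and then to recognise $L_f$ as a short sum of products of Gauss sums governed by Lemma~\ref{lem:sum_of_gauss_sums}. Write $n=q-1$ and let $\theta$ generate $\F_q^*$. Identifying the evaluation $\e_k=e^{2\pi ik/n}$ with the character $\chi_k$ of $\F_q^*$ determined by $\chi_k(\theta)=\e_k$, the coefficients $\1_D(\theta^j)=2\,[\theta^j\in D]-1$ give
\[
f(\e_k)=\sum_{y\in\F_q^*}\1_D(y)\,\chi_k(y)=2\chi_k(D)-(q-1)\,[\chi_k\text{ trivial}],
\]
so $f(\e_0)=1$ and $f(\e_k)=2\chi_k(D)$ for $k\ne 0$. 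Since a Gordon--Mills--Welch difference set has parameters $(q-1,q/2,q/4)$, Lemma~\ref{lem:character_values_diff_set} yields $\abs{\chi(D)}^2=q/4$, hence $\abs{f(\e_k)}^2=q$ for every $k\ne 0$. Because $\chi_{k+a}=\chi_k\chi_a$, putting $\alpha=\chi_a$, $\beta=\chi_b$, $\gamma=\chi_c$ and writing $g(\chi_k)=f(\e_k)$ turns the definition of $L_f$ into a sum over all characters $\chi$ of $\F_q^*$,
\[
L_f(a,b,c)=\frac{1}{n^3}\sum_{\chi}g(\chi)\,g(\chi\alpha)\,\overline{g(\chi\beta)}\,\overline{g(\chi\gamma)}.
\]

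Next I would feed in Lemma~\ref{lem:character_values_gmw}. It shows that, for every $\chi$, one has $g(\chi)=u(\chi)\,G(\chi)$, where $u(\chi)=2\chi^*(B)/G(\chi^*)$ when the restriction $\chi^*=\chi|_{\F_s^*}$ is nontrivial and $u(\chi)=-1$ otherwise (the trivial character is included here, since $G(\chi_0)=-1$). As $\abs{\chi^*(B)}^2=s/4$ for nontrivial $\chi^*$ and $\abs{G(\chi^*)}=\sqrt s$, we get $\abs{u(\chi)}=1$ throughout, so there are no exceptional terms to correct. The indicator $I_{n}(a,b,c)$ equals $1$ exactly when $\{1,\alpha\}=\{\beta,\gamma\}$ as multisets, and in that \emph{diagonal} case the summand collapses to $\abs{g(\chi)}^2\abs{g(\chi\alpha)}^2$, which equals $q^2$ for all but $O(1)$ characters; an elementary count then gives $\bigabs{L_f(a,b,c)-1}=O(q^{-2})$, well within the claimed bound. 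This disposes of the diagonal by a direct computation.

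For the \emph{off-diagonal} case $I_n(a,b,c)=0$, substituting $g(\chi)=u(\chi)G(\chi)$ gives
\[
L_f(a,b,c)=\frac{1}{n^3}\sum_{\chi}U(\chi)\,G(\chi)\,G(\chi\alpha)\,\overline{G(\chi\beta)}\,\overline{G(\chi\gamma)},\qquad U(\chi)=u(\chi)u(\chi\alpha)\overline{u(\chi\beta)}\,\overline{u(\chi\gamma)},
\]
with $\abs{U(\chi)}=1$. The condition $I_n=0$ says precisely that $\{1,\alpha\}$ is not a permutation of $\{\beta,\gamma\}$, which is the hypothesis of Lemma~\ref{lem:sum_of_gauss_sums} with $r=s=2$; that lemma bounds $\bigabs{\sum_\chi G(\chi)G(\chi\alpha)\overline{G(\chi\beta)}\,\overline{G(\chi\gamma)}}$ by $2q^{5/2}$, and dividing by $n^3=(q-1)^3$ would give the proposition \emph{were $U$ constant}. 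This is exactly what happens in the Singer case $s=2$, where $u\equiv -1$ and $U\equiv 1$, recovering~\cite[Theorem~2.2~(i)]{JedKatSch2013}.

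The main obstacle is therefore the weight $U(\chi)$, which for a general building block $B$ is a genuinely non-constant unimodular factor depending on $\chi$ only through the restriction $\chi^*$. The route I would take is to group the sum by the value $\xi=\chi^*$: the fibre $\{\chi:\chi^*=\xi\}$ is a coset of the kernel of the restriction map, and orthogonality over that kernel rewrites each fibre sum as a full sum over the characters of $\F_q^*$ twisted by $\chi(y)$ with $y\in\F_s^*$, each of which is again a sum of products of (now non-canonical) Gauss sums controlled by $2q^{5/2}$. The delicate point is the recombination over $\xi$: a crude use of the triangle inequality or Cauchy--Schwarz, together with $\sum_{\chi^*}\abs{U}^2\approx s-1$, costs a spurious factor of $\sqrt{s-1}$. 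Removing that loss so as to land on the clean constant $2$ \emph{uniformly} in the subfield $\F_s$ and in the choice of $B$ is the crux of the argument and the step that genuinely goes beyond the Singer case; I would expect it to rely on the exact calibration $\abs{\chi^*(B)}^2=s/4$ and $\abs{G(\chi^*)}^2=s$ rather than on mere modulus bounds. Note that even the lossy bound, carrying $\sqrt{s-1}\le q^{1/4}$, already suffices to drive $(\log q)^3\max\abs{L_f-I_{q-1}}\to 0$ and hence to deduce Theorem~\ref{thm:gmw} via Theorem~\ref{thm:mf_from_L_1}.
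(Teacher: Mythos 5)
Your setup coincides with the paper's: the identity $f(\e_k)=u(\chi_k)G(\chi_k)$ with a unimodular weight depending only on the restriction $\chi_k^*$ to $\F_s^*$, the direct computation in the diagonal case, and the reduction of the off-diagonal case to the weighted sum $\sum_\chi U(\chi)G(\chi)G(\chi\xi^a)\overline{G(\chi\xi^b)G(\chi\xi^c)}$ with $\abs{U(\chi)}=1$. But the step you yourself call the crux is exactly the step you do not supply, so this is a genuine gap rather than a routine omission: you give no bound at all on that weighted sum, and even your ``lossy'' fallback is unsupported, because Lemma~\ref{lem:sum_of_gauss_sums} controls only the \emph{complete} sum over all $\chi$, whereas after grouping by the fibres of $\chi\mapsto\chi^*$ you need a bound on the \emph{incomplete} sums $\sum_{\chi\in\phi H}G(\chi)G(\chi\xi^a)\overline{G(\chi\xi^b)G(\chi\xi^c)}$ over a coset of the subgroup $H$ of characters trivial on $\F_s^*$; neither the triangle inequality over the $s-1$ cosets nor Cauchy--Schwarz converts the complete-sum estimate into such a fibre bound.

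For comparison, the paper's route is: triangle inequality over the $s-1$ cosets (costing a factor $s-1$, not $\sqrt{s-1}$), followed by the assertion that each coset sum equals $\tfrac{1}{s-1}$ times the complete sum --- obtained by opening the four Gauss sums and replacing $\sum_{\chi\in H}\chi(wx/yz)$ by $\tfrac{1}{s-1}\sum_{\chi}\chi(wx/yz)$ over all $\chi$ --- after which Lemma~\ref{lem:sum_of_gauss_sums} gives $2q^{5/2}/(s-1)$ per coset and the two factors of $s-1$ cancel exactly; no calibration of $\abs{\chi^*(B)}$ enters. You should be aware, though, that your instinct that the recombination over the subfield is the delicate point is well founded: $\tfrac{s-1}{q-1}\sum_{\chi\in H}\chi(u)$ is the indicator of $u\in\F_s^*$, not of $u=1$, so for $s>2$ the coset sum is in fact $\tfrac{1}{s-1}\sum_{v\in\F_s^*}\phi(v)\sum_\chi\overline{\chi(v)}\,G(\chi)G(\chi\xi^a)\overline{G(\chi\xi^b)G(\chi\xi^c)}$, and the terms with $v\ne 1$ are twisted complete sums that are not covered by Lemma~\ref{lem:sum_of_gauss_sums} as stated and must be shown not to overwhelm the main term. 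So the obstacle you identified is real; completing the argument requires the twisted form of Katz's estimate and a genuine treatment of that recombination, which neither your sketch nor the orthogonality identity as literally printed provides.
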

\begin{proof}
By definition, there exists a proper subfield $\F_s$ of $\F_q$ such that the underlying Gordon-Mills-Welch difference set is
\[
D=\{ab:a\in A,b\in B\},
\]
where $A$ contains all elements $a\in\F_q$ with $\Tr_{q,s}(a)=1$ and $B$ is a difference set in $\F_s^*$ with $\abs{B}=s/2$. Let~$\theta$ be a generator for $\F_q^*$ such that 
\[
f(z)=\sum_{j=0}^{q-2}\1_D(\theta^j)z^j.
\]
Let $\xi$ be the multiplicative character of $\F_q$ given by $\xi(\theta)=e^{2\pi i/(q-1)}$. It is readily verified that
\[
f(e^{2\pi ik/(q-1)})=\begin{cases}
1         & \text{for $k\equiv 0\pmod n$}\\[1ex]
2\xi^k(D) & \text{for $k\not\equiv 0\pmod n$}.
\end{cases}
\]
It then follows from Lemmas~\ref{lem:character_values_gmw} and~\ref{lem:gauss}~(i) that
\[
f(e^{2\pi ik/(q-1)})=C_kG(\xi^k),
\]
where $C_k$ has unit magnitude for all $k$ and depends only on $k$ modulo $s-1$. Therefore
\begin{equation}
L_f(a,b,c)=\frac{1}{(q-1)^3}\sum_{\chi}G(\chi)G(\chi\xi^a)\overline{G(\chi\xi^b)G(\chi\xi^c)}\;C_\chi(a,b,c),   \label{eqn:L_gauss}
\end{equation}
where the sum is over all multiplicative characters $\chi$ of $\F_q$ and $C_\chi(a,b,c)$ has unit magnitude. Using Lemma~\ref{lem:gauss}, we obtain
\[
L_f(a,b,c)=\begin{cases}
1+\frac{q-2}{(q-1)^2} & \text{for $a=b=c=0$}\\[1ex]
1-\frac{1}{(q-1)^2}   & \text{for $\{0,a\}=\{b,c\}$ and $a\ne 0$},
\end{cases}
\]
which proves the desired result in the case that $I_{q-1}(a,b,c)=1$.
\par
Now assume that $\{0,a\}\ne\{b,c\}$, so that $I_{q-1}(a,b,c)=0$. We need to show that
\begin{equation}
\abs{L_f(a,b,c)}\le \frac{2q^{5/2}}{(q-1)^3}.   \label{eqn:L_GMW_small}
\end{equation}
Let $H$ be the subgroup of index $s-1$ of the character group of $\F_q^*$ and note that $H$ is not the trivial group since, by assumption, $s<q$. Then $C_\chi(a,b,c)$ is constant when $\chi$ ranges over a coset of $H$. Since $C_\chi(a,b,c)$ has unit magnitude, we find from~\eqref{eqn:L_gauss} and the triangle inequality that
\begin{equation}
\abs{L_f(a,b,c)}\le\frac{s-1}{(q-1)^3}\,\max_{\phi}\,\Biggabs{\sum_{\chi\in H}\,G(\chi\phi)G(\chi\phi\xi^a)\overline{G(\chi\phi\xi^b)G(\chi\phi\xi^c)}},   \label{eqn:L_sum_H}
\end{equation}
where the maximum is over all multiplicative characters $\phi$ of $\F_q$. By the definition~\eqref{eqn:def_gauss} of a Gauss sum over $\F_q$, the inner sum can be written as
\[
\sum_{w,x,y,z\in \F_q^*}(-1)^{\Tr_{q,2}(w+x+y+z)}\,\xi^a(x)\overline{\xi^b(y)\xi^c(z)}\,\phi\Big(\frac{wx}{yz}\Big)\sum_{\chi\in H}\chi\Big(\frac{wx}{yz}\Big).
\]
For each $w,x,y,z\in\F_q^*$, we have
\[
\frac{s-1}{q-1}\sum_{\chi\in H}\chi\Big(\frac{wx}{yz}\Big)=\frac{1}{q-1}\sum_\chi\chi\Big(\frac{wx}{yz}\Big),
\]
where the sum on the right-hand side is over all multiplicative characters of~$\F_q$, since both sides equal either $0$ or $1$ depending on whether $wx=yz$ or not. Therefore, we can rewrite the inner sum of~\eqref{eqn:L_sum_H} as
\[
\frac{1}{s-1}\sum_\chi\,G(\chi\phi)G(\chi\phi\xi^a)\overline{G(\chi\phi\xi^b)G(\chi\phi\xi^c)},
\]
where $\chi$ now runs over all multiplicative characters of~$\F_q$. The magnitude of this expression is at most $\frac{2}{s-1}q^{5/2}$ by Lemma~\ref{lem:sum_of_gauss_sums}. Substitute into~\eqref{eqn:L_sum_H} to conclude that~\eqref{eqn:L_GMW_small} holds, as required.
\end{proof}


\section{Sidelnikov sets}
\label{sec:sidelnikov}

In this section we prove Theorem~\ref{thm:sidelnikov} by combining Theorem~\ref{thm:mf_from_L_2} with the following result.
\begin{proposition}
Let $q$ be an odd prime power and let $f$ be a characteristic polynomial of a Sidelnikov set in $\F_q^*$. Then
\[
\bigabs{L_f(a,b,c)-(I_{q-1}(a,b,c)+K_{q-1}(a,b,c))}\le \frac{23q^{5/2}}{(q-1)^3}
\]
for all $a,b,c\in\Z/(q-1)\Z$.
\end{proposition}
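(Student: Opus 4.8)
The plan is to follow the same strategy that worked for the Gordon-Mills-Welch case (Proposition~\ref{pro:L_gmw}), expressing the values $f(e^{2\pi ik/(q-1)})$ in terms of Gauss sums and then reducing $L_f(a,b,c)$ to a character sum that can be bounded via Lemma~\ref{lem:sum_of_gauss_sums}. First I would fix a generator $\theta$ of $\F_q^*$ so that $f(z)=\sum_{j=0}^{q-2}\1_D(\theta^j)z^j$, where $D$ is the Sidelnikov set~\eqref{eqn:def_sidelnikov}. Let $\eta$ be the quadratic character of $\F_q$ and $\xi$ the character with $\xi(\theta)=e^{2\pi i/(q-1)}$. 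Because membership of $x$ in $D$ is governed by whether $x+1$ is a square, the indicator $\1_D(x)$ can be written in terms of $\eta(x+1)$, so the evaluation $f(e^{2\pi ik/(q-1)})$ becomes, up to a unit-magnitude factor and a small correction, a Gauss-type sum $\sum_{x}\eta(x+1)\xi^k(x)$. The key difference from the GMW case is that here $q$ is \emph{odd}, the underlying set is only an \emph{almost} difference set, and $f$ has odd degree $q-2$; this is why the target approximant is $I_{q-1}+K_{q-1}$ rather than $I_{q-1}+\nu J_{q-1}$, and why the constant $23$ is larger.

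Next I would compute $f(e^{2\pi ik/(q-1)})$ explicitly. The sum $\sum_{x\in\F_q^*}\eta(x+1)\xi^k(x)$ is essentially a Jacobi sum $J(\xi^k,\eta)$ after the substitution relating $x$ and $x+1$; using Lemma~\ref{lem:jacobi}~(iv) this converts to a ratio of Gauss sums $G(\xi^k)G(\eta)\overline{G(\xi^k\eta)}/q$ when the relevant characters are nontrivial. The upshot should be a formula of the shape $f(e^{2\pi ik/(q-1)})=C_k\,G(\xi^k)\,\overline{G(\xi^k\eta)}$ (times $G(\eta)/q$ and sign factors), with $\abs{C_k}$ bounded and $C_k$ depending on $k$ in a controlled way, together with finitely many exceptional values of $k$ (such as $k\equiv 0$ and the $k$ for which $\xi^k\eta$ is trivial, i.e.\ $k\equiv (q-1)/2$) that must be handled separately. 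Substituting into the definition of $L_f$ then yields
\[
L_f(a,b,c)=\frac{1}{(q-1)^3}\sum_{\chi}G(\chi)G(\chi\xi^a)\overline{G(\chi\xi^b)G(\chi\xi^c)}\cdot(\text{factors involving }\chi\eta)\cdot C_\chi(a,b,c),
\]
where the extra $G(\chi\eta)$-type factors are what ultimately produce the $K_{q-1}$ term: the appearance of $\eta$ (equivalently the index $(q-1)/2$) is exactly the mechanism by which the condition $a=n/2$, $b=c+n/2$ in $K_{q-1}$ arises.

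The main work, and the main obstacle, is the bound~\eqref{eqn:L_GMW_small}-analogue in the generic case $I_{q-1}(a,b,c)+K_{q-1}(a,b,c)=0$. Here I would expand each Gauss sum via~\eqref{eqn:def_gauss}, collapse the resulting character sum using orthogonality (as in the GMW proof, where $\frac{1}{q-1}\sum_\chi\chi(wx/yz)$ detects $wx=yz$), and arrive at a single sum of products of Gauss sums $\sum_\chi G(\chi\phi_1)G(\chi\phi_2)\overline{G(\chi\phi_3)G(\chi\phi_4)}$ to which Lemma~\ref{lem:sum_of_gauss_sums} applies with $r=s=2$, giving $O(q^{5/2})$. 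The delicate part is verifying that whenever $\{0,a\}\ne\{b,c\}$ \emph{and} we are not in the $K_{q-1}$ case, the multiset of top characters genuinely differs from the multiset of bottom characters, so that Katz's hypothesis (that $\alpha_1,\dots,\alpha_r$ do not arise by permuting $\beta_1,\dots,\beta_s$) is satisfied and the bound is nontrivial; because $\eta$ now appears inside the characters, the permutation-equality condition has more cases than in the GMW argument, and tracking exactly which tuples $(a,b,c)$ force equality (recovering precisely $I_{q-1}+K_{q-1}$) is where the bookkeeping concentrates. Finally I would collect the contributions of the few exceptional $k$ and the main sum into the single clean constant, confirming that the accumulated error is at most $23q^{5/2}/(q-1)^3$.
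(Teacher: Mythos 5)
Your overall route is the paper's route: write $f(e^{2\pi ik/(q-1)})=(-1)^k(1+J(\eta,\xi^k))$ using the quadratic character, convert the resulting fourfold product of Jacobi sums into Gauss sums via Lemma~\ref{lem:jacobi}~(iv), and apply Lemma~\ref{lem:sum_of_gauss_sums}, with the multiset condition on the characters $\{1,\xi^a,\eta\xi^b,\eta\xi^c\}$ versus $\{\eta,\eta\xi^a,\xi^b,\xi^c\}$ accounting exactly for $I_{q-1}+K_{q-1}$. You correctly identify that the presence of $\eta$ is what produces the $K_{q-1}$ term, and handling the exceptional indices and the $+1$ inside $(1+J(\eta,\xi^k))$ as a lower-order error is indeed how the $\Delta$ terms arise.

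However, your plan for the crucial generic bound is mis-specified. Each value $f(e^{2\pi ik/(q-1)})$ contributes \emph{two} varying Gauss sums, $G(\xi^k)$ and $\overline{G(\eta\xi^k)}$ (times the constant $G(\eta)/q$), so the sum over $\chi$ that you must bound is a sum of products of \emph{eight} Gauss sums, and Lemma~\ref{lem:sum_of_gauss_sums} must be applied with $r=s=4$, giving $4q^{9/2}$; the bound $4q^{5/2}$ then comes from the accumulated prefactor $\abs{G(\eta)}^4/q^4=1/q^2$, not from a two-versus-two application of Katz's lemma. Your proposed intermediate step --- expanding the Gauss sums and ``collapsing'' via orthogonality as in the Gordon--Mills--Welch proof to reach a sum $\sum_\chi G(\chi\phi_1)G(\chi\phi_2)\overline{G(\chi\phi_3)G(\chi\phi_4)}$ with $r=s=2$ --- does not do what you want: in the GMW argument that manipulation only converts a sum over a proper subgroup of the character group into a sum over the full group, and it never reduces the number of Gauss sums in each term. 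Here the sum is already over all characters of $\F_q$, so there is nothing to collapse, and no orthogonality argument will turn an eight-Gauss-sum product into a four-Gauss-sum one. The fix is simply to drop that step and apply Katz's lemma directly with $r=s=4$; with that correction (and the bookkeeping you defer on which tuples $(a,b,c)$ make the two multisets of four characters coincide), your plan matches the paper's proof.
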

\begin{proof}
Let $\eta$ be the quadratic character of $\F_q$. By the definition of a Sidelnikov set~\eqref{eqn:def_sidelnikov}, there exists a generator~$\theta$ of $\F_q^*$ such that
\[
f(z)=z^{\frac{q-1}{2}}+\sum_{j=0}^{q-2}\eta(\theta^j+1)z^j,
\]
where, as usual, $\eta(0)=0$. Let $\xi$ be the multiplicative character of $\F_q$ given by $\xi(\theta)=e^{2\pi i/(q-1)}$. Then we have, for all $k\not\equiv 0\pmod{q-1}$,
\begin{align*}
f(e^{2\pi i k/(q-1)})&=(-1)^k+\sum_{j=0}^{q-2}\eta(\theta^j+1)\xi^k(\theta^j)\\
&=(-1)^k+\sum_{y\in\F_q}\eta(y+1)\xi^k(y)\\
&=(-1)^k+\xi^k(-1)\sum_{y\in\F_q}\eta(y)\xi^k(1-y)\\
&=(-1)^k(1+J(\eta,\xi^k)).
\end{align*}
On the other hand we have $f(1)=1-\eta(1)=0$. Therefore
\begin{equation}
L_f(a,b,c)=\frac{(-1)^{a+b+c}}{(q-1)^3}\sum_{\chi}J(\eta,\chi)J(\eta,\chi\xi^a)\overline{J(\eta,\chi\xi^b)J(\eta,\chi\xi^c)}+\Delta,   \label{eqn:L_sidelnikov}
\end{equation}
where the sum is over all multiplicative characters $\chi$ of $\F_q$ and $\abs{\Delta}\le 15q^{3/2}/(q-1)^2$, using Lemma~\ref{lem:jacobi}. If one of $b$ and $c$ equals $a$ and the other is zero, then by Lemma~\ref{lem:jacobi} the sum in~\eqref{eqn:L_sidelnikov} is between $(q-5)q^2$ and $(q-2)q^2$. If $a=(q-1)/2$ and $b=c+(q-1)/2$, then $\xi^a=\eta$ and $\xi^b=\xi^c\eta$ and by Lemma~\ref{lem:jacobi} (in particular (v)) the sum~\eqref{eqn:L_sidelnikov} is again at least $(q-5)q^2$ and at most $(q-2)q^2$. Since
\[
\frac{(q-5)q^2}{(q-1)^3}=1-\frac{2q^2+3q-1}{(q-1)^3},
\]
this establishes the cases in which either $I_{q-1}(a,b,c)$ or $K_{q-1}(a,b,c)$ equals~$1$.
\par
Now assume that $(a,b,c)$ is such that $I_{q-1}(a,b,c)$ and $K_{q-1}(a,b,c)$ are both zero. Equivalently, the multisets
\begin{equation}
\{\xi^0,\xi^a,\xi^b\eta,\xi^c\eta\}\quad\text{and}\quad\{\eta,\xi^a\eta,\xi^b,\xi^c\}   \label{eqn:sets_chars}
\end{equation}
are distinct. Use Lemmas~\ref{lem:jacobi} and~\ref{lem:gauss} to see that the sum in~\eqref{eqn:L_sidelnikov} equals
\begin{equation}
\frac{1}{q^2}\sums{\chi}G(\chi)G(\chi\xi^a)G(\chi\eta\xi^b)G(\chi\eta\xi^c)\overline{G(\chi\eta)G(\chi\eta\xi^a)G(\chi\xi^b)G(\chi\xi^c)}   \label{eqn:L_sidelnikov_gauss}
\end{equation}
plus an error term of magnitude at most $4q^{3/2}$, where the sum is over all multiplicative characters $\chi$ of $\F_q$. Since the multisets~\eqref{eqn:sets_chars} are distinct, we can apply Lemma~\ref{lem:sum_of_gauss_sums} to see that~\eqref{eqn:L_sidelnikov_gauss} is at most~$4q^{5/2}$. This shows that
\enlargethispage{4ex}
\[
\abs{L_f(a,b,c)}\le \frac{23q^{5/2}}{(q-1)^3},
\]
as required.
\end{proof}


\section{Cyclotomic constructions}
\label{sec:residues}

In this section we prove Theorem~\ref{thm:mf_cyclotomic} and Corollaries~\ref{cor:fourth_order} and~\ref{cor:sixth_order}. We shall use the following notation. Let $m$ be an even positive integer and let~$p$ be a prime satisfying $p\equiv 1\pmod m$. Let $\omega$ be a primitive element of $\F_p$ and let $C_0,C_1,\dots,C_{m-1}$ be the cyclotomic classes of $\F_p$ of order $m$ with respect to $\omega$. Let $S$ be an $m/2$-element subset of $\{0,1,\dots,m-1\}$ and let $D$ be the union of the $m/2$ cyclotomic classes $C_s$ with $s\in S$. Taking $1$ as a generator for the additive group of $\F_p$, a characteristic polynomial of $D$ is
\begin{equation}
f(z)=\sum_{j=0}^{p-1}\1_D(j)z^j.   \label{eqn:f_with_g_1}
\end{equation}
The following lemma gives the evaluations of $f$ at $p$-th roots of unity.
\begin{lemma}
\label{lem:char_val_residue_poly}
Assume the notation as above and let $\chi$ be a multiplicative character of $\F_p$ of order $m$. Then
\[
f(e^{2\pi ik/p})=\frac{2}{m}\sum_{j=1}^{m-1}G(\chi^j)\,\overline{\chi^j(k)}\;\sum_{s\in S}\;\overline{\chi^j(\omega^s)}-1.
\]
\end{lemma}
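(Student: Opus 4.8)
The plan is to evaluate $f$ at $e^{2\pi ik/p}$ by rewriting the $\pm1$-valued function $\1_D$ through the ordinary zero-one indicator of $D$ and then expanding that indicator in the multiplicative characters of order dividing $m$. Since $0\notin D$, one has $\1_D(y)=2\,\mathbbm{1}[y\in D]-1$ for every $y\in\F_p$ (both sides equal $-1$ at $y=0$). Substituting this into~\eqref{eqn:f_with_g_1} and splitting off the geometric sum gives
\[
f(e^{2\pi ik/p})=2\sum_{y\in D}e^{2\pi iyk/p}-\sum_{y\in\F_p}e^{2\pi iyk/p},
\]
where the second sum equals $p$ if $p\mid k$ and $0$ otherwise.

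First I would expand the indicator of $D$. Because $C_0$ is the subgroup of $m$-th powers, of index $m$ in $\F_p^*$, the characters $\chi^0,\dots,\chi^{m-1}$ are exactly those trivial on $C_0$, so orthogonality yields, for $y\in\F_p^*$,
\[
\mathbbm{1}[y\in D]=\sum_{s\in S}\mathbbm{1}[y\in C_s]=\frac{1}{m}\sum_{j=0}^{m-1}\chi^j(y)\sum_{s\in S}\overline{\chi^j(\omega^s)},
\]
using $\chi^j(\omega^{-s})=\overline{\chi^j(\omega^s)}$. Feeding this into $\sum_{y\in D}e^{2\pi iyk/p}=\sum_{y\in\F_p^*}\mathbbm{1}[y\in D]\,e^{2\pi iyk/p}$ and interchanging the order of summation reduces the problem to the inner character sums $\sum_{y\in\F_p^*}\chi^j(y)\,e^{2\pi iyk/p}$.

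The key step is evaluating these inner sums. For $j=0$ the sum is $\sum_{y\in\F_p^*}e^{2\pi iyk/p}=p\,\mathbbm{1}[p\mid k]-1$, while for $1\le j\le m-1$ the substitution $y\mapsto yk^{-1}$ (legitimate when $p\nmid k$) identifies it with $\overline{\chi^j(k)}\,G(\chi^j)$, matching the definition~\eqref{eqn:def_gauss} of the Gauss sum over $\F_p$; this identity survives the case $p\mid k$ as well, since then the left side vanishes by orthogonality and the right side vanishes because $\chi^j(0)=0$ for nontrivial $\chi^j$. Assembling the pieces, the coefficient $\sum_{s\in S}1=m/2$ of the $j=0$ term cancels against the factor $2/m$, and I expect the two contributions proportional to $p\,\mathbbm{1}[p\mid k]$ (one from the $j=0$ term, one from the separated geometric sum) to cancel exactly, leaving precisely the claimed formula with its trailing constant $-1$, valid uniformly in $k$.

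The main point requiring care is the bookkeeping at the two exceptional places: the character expansion of the indicator is valid only on $\F_p^*$ and fails at $y=0$, and the additive geometric sum behaves differently when $p\mid k$. The whole content of the lemma is that, once the $j=0$ term and the subtracted geometric sum are combined, these two anomalies cancel and one obtains a single closed form holding for all $k$ (in particular $f(1)=-1$, consistent with $\abs{D}=(p-1)/2$). No deep tools are needed here beyond character orthogonality and the definition of $G(\chi^j)$; even Lemma~\ref{lem:gauss} is not required.
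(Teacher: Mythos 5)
Your proof is correct and follows essentially the same route as the paper: detect the cyclotomic classes by the characters $\chi^0,\dots,\chi^{m-1}$ via orthogonality, identify the resulting twisted exponential sums as $G(\chi^j)\,\overline{\chi^j(k\omega^s)}$, and let the $j=0$ contribution cancel against the subtracted full additive sum to leave the constant $-1$. The only cosmetic difference is that the paper disposes of the case $p\mid k$ at the outset by computing $f(1)=\abs{D}-\abs{\F_p\setminus D}=-1$ directly, whereas you carry the two $p\,\mathbbm{1}[p\mid k]$ anomalies along and observe that they cancel uniformly in $k$.
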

\begin{proof}
Since $f(1)=\abs{D}-\abs{\F_p\setminus D}=-1$, the result holds for $k\equiv 0\pmod p$, so assume that $k\not\equiv 0\pmod p$. By definition we have
\begin{align*}
f(e^{2\pi ik/p})&=\sum_{y\in D}e^{2\pi iky/p}-\sum_{y\in\F_p\setminus D}e^{2\pi iky/p}\\
&=2\sum_{y\in D}e^{2\pi iky/p}-\sum_{y\in\F_p}e^{2\pi iky/p}\\
&=2\sum_{y\in D}e^{2\pi iky/p}\\
&=2\sum_{s\in S}\sum_{y\in C_s}e^{2\pi iky/p}.
\end{align*}
Writing $h=\frac{p-1}{m}$, the inner sum 
can be written as
\begin{align*}
\sum_{y\in C_s}e^{2\pi iky/p}&=\sum_{j=0}^{h-1}e^{2\pi ik\omega^{mj+s}/p}\\
&=\frac{1}{m}\left(\sum_{y\in\F_p}e^{2\pi ik\,\omega^sy^m/p}-1\right).
\end{align*}
Since $\sum_{j=0}^{m-1}\chi^j(y)$ equals $m$ if $y$ is an $m$-th power and equals zero otherwise, we have, for each $a\in\F_p^*$,
\begin{align*}
\sum_{y\in\F_p}e^{2\pi iay^m/p}
&=\sum_{y\in\F_p}e^{2\pi iay/p}\,\sum_{j=0}^{m-1}\chi^j(y)\\
&=\sum_{j=0}^{m-1}\sum_{y\in\F_p}e^{2\pi iy/p}\,\chi^j(y)\,\overline{\chi^j(a)}.
\end{align*}
For $j=0$, the inner sum equals zero, so we can let the outer sum start with $j=1$. Then all involved  multiplicative characters are nontrivial and we can restrict the summation range of the inner sum to $\F_p^*$. Therefore
\[
\sum_{y\in\F_p}e^{2\pi iay^m/p}=\sum_{j=1}^{m-1}G(\chi^j)\,\overline{\chi^j(a)},
\]
which gives the desired result.
\end{proof}
\par
Our next result estimates $L_f$ for $f$ given in~\eqref{eqn:f_with_g_1} at all points, but $(0,0,0)$.
\begin{proposition}
\label{pro:L_cyclotomic}
With the notation as above, we have
\[
\bigabs{L_f(a,b,c)-(I_p(a,b,c)+\nu J_p(a,b,c))}\le 18(m-1)^4p^{-1/2}
\]
for all $a,b,c\in\Z/p\Z$ with $(a,b,c)\ne(0,0,0)$, where
\[
\nu=\begin{cases}
1                  & \text{for $\frac{p-1}{m}$ even}\\[1ex]
(\frac{4N}{m}-1)^2 & \text{for $\frac{p-1}{m}$ odd}
\end{cases}
\]
and
\[
N=\bigabs{\{(s,s')\in S\times S:s-s'=m/2\}}.
\]
\end{proposition}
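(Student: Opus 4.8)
The plan is to insert the character-value formula of Lemma~\ref{lem:char_val_residue_poly} into the definition of $L_f(a,b,c)$ and reduce everything to complete multiplicative character sums over $\F_p$, paralleling the Gauss-sum reductions of Sections~\ref{sec:gmw} and~\ref{sec:sidelnikov}. Writing $\gamma_j=\sum_{s\in S}\overline{\chi^j(\omega^s)}$, Lemma~\ref{lem:char_val_residue_poly} reads $f(\e_k)=\tfrac{2}{m}\sum_{j=1}^{m-1}\gamma_j\,G(\chi^j)\,\overline{\chi^j(k)}-1$; adjoining a formal term $j=0$ (for which $\chi^0(0)=1$, $G(\chi^0)=-1$, and $\gamma_0=m/2$ reproduce the constant $-1$) lets me treat all four factors of $f(\e_k)f(\e_{k+a})\overline{f(\e_{k+b})f(\e_{k+c})}$ on an equal footing. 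Expanding the fourfold product, writing the resulting inner sum over $k$ as $W=\sum_{k}\overline{\chi^{j_1}(k)}\,\overline{\chi^{j_2}(k+a)}\,\chi^{j_3}(k+b)\,\chi^{j_4}(k+c)$, gives
\[
L_f(a,b,c)=\frac{1}{p^3}\Big(\frac{2}{m}\Big)^4\sum_{j_1,j_2,j_3,j_4=0}^{m-1}\gamma_{j_1}\gamma_{j_2}\overline{\gamma_{j_3}\gamma_{j_4}}\,G(\chi^{j_1})G(\chi^{j_2})\overline{G(\chi^{j_3})G(\chi^{j_4})}\,W.
\]
Any tuple with some $j_i=0$ loses a factor of $\sqrt p$ (as $\abs{G(\chi^0)}=1$), so, using $\abs{W}\le p$, $\abs{\gamma_j}\le m/2$, and Lemma~\ref{lem:gauss}(ii), all such tuples contribute only $O((m-1)^3p^{-1/2})$ and may be set aside.

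For the main tuples, in which every $\chi^{j_i}$ is nontrivial, the key is to separate those for which $W$ has order $p$ from those for which it has only order $\sqrt p$. The analytic input here is Weil's bound for complete multiplicative character sums (available via~\cite{LidNie1997}), which plays the role that Lemma~\ref{lem:sum_of_gauss_sums} played in the previous sections: unless the rational function $k^{-j_1}(k+a)^{-j_2}(k+b)^{j_3}(k+c)^{j_4}$ is a perfect $m$-th power in $\F_p(k)$, one has $\abs{W}\le 3\sqrt p$, there being at most four linear factors. A short case analysis of the ways in which two of the shifts $0,a,b,c$ can coincide shows that, for $(a,b,c)\ne(0,0,0)$, the sum $W$ has order $p$ in exactly three families of tuples: (i) $b=0,\ c=a,\ j_3=j_1,\ j_4=j_2$; (ii) $c=0,\ b=a,\ j_4=j_1,\ j_3=j_2$; and (iii) $a=0,\ b=c,\ j_1+j_2\equiv j_3+j_4\equiv0\pmod m$. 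Families (i) and (ii) occur precisely when $I_p(a,b,c)=1$, family (iii) precisely when $J_p(a,b,c)=1$, and these conditions are mutually exclusive away from the origin. Every remaining main tuple satisfies $\abs{W}\le 3\sqrt p$, and since there are at most $(m-1)^4$ of them each contributing $O(p^{-1/2})$, they are absorbed into the error.

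It then remains to evaluate the three exceptional families. In families (i) and (ii) one has $W=p-2$ and the Gauss factors collapse through $\abs{G(\chi^j)}^2=p$, leaving the limiting coefficient $(\tfrac{2}{m})^4\big(\sum_{j=1}^{m-1}\abs{\gamma_j}^2\big)^2$; expanding $\abs{\gamma_j}^2=\sum_{s,s'\in S}\chi^j(\omega^{s'-s})$ and using orthogonality gives $\sum_{j=1}^{m-1}\abs{\gamma_j}^2=m\abs{S}-\abs{S}^2=m^2/4$, so this coefficient is $1$, matching $I_p$. For family (iii) I would pair $j_2=-j_1$, $j_4=-j_3$ and use Lemma~\ref{lem:gauss}(iii) in the form $G(\chi^j)G(\chi^{-j})=\chi^j(-1)p$ together with $\gamma_{-j}=\overline{\gamma_j}$, reducing the coefficient to $(\tfrac{2}{m})^4\big(\sum_{j=1}^{m-1}\chi^j(-1)\abs{\gamma_j}^2\big)^2$. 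Evaluating this last sum is the step I expect to be the main obstacle, since it is where the parity of $\tfrac{p-1}{m}$ enters: because $-1=\omega^{(p-1)/2}$, one has $\chi^j(-1)=1$ when $\tfrac{p-1}{m}$ is even and $\chi^j(-1)=(-1)^j$ when $\tfrac{p-1}{m}$ is odd, and resumming over $j$ by orthogonality yields $\sum_{j=1}^{m-1}\chi^j(-1)\abs{\gamma_j}^2=m^2/4$ in the even case and $mN-m^2/4$ in the odd case, with $N$ the cyclotomic count of the statement. Hence the coefficient equals $1$ (so $\nu=1$) in the even case and $(\tfrac{4N}{m}-1)^2$ in the odd case, exactly as claimed. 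Finally I would collect the $O(p^{-1})$ discrepancies between the exact prefactors $(p-2)p^2/p^3$ and their limits, together with all the error terms above, and check that their total does not exceed $18(m-1)^4p^{-1/2}$; the origin is excluded throughout because there all four arguments coincide, making $W$ of order $p$ for many tuples, and that single point is instead controlled by hypothesis~\eqref{eqn:cond_Ru} when Theorem~\ref{thm:mf_from_L_1} is invoked.
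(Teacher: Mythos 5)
Your proposal is correct and follows essentially the same route as the paper's proof: substitute the character-value formula of Lemma~\ref{lem:char_val_residue_poly} into $L_f$, use the Weil bound to isolate the degenerate index tuples (which correspond exactly to $I_p$ and $J_p$), and evaluate the surviving terms via $\abs{G(\chi^j)}^2=q$, $G(\chi^j)G(\overline{\chi^j})=\chi^j(-1)p$, and orthogonality over $j$; your adjoining of a formal $j=0$ term is just a repackaging of the paper's error term $\Delta$. The only substantive difference is that you spell out the ``standard calculation'' yielding $\nu$, where your identity $\sum_{j=1}^{m-1}\chi^j(-1)\abs{\gamma_j}^2=mN-m^2/4$ is correct provided $N$ counts pairs with $s-s'\equiv m/2\pmod{m}$ (the reading under which the stated formula for $\nu$ is consistent, e.g.\ with $S=\{0,2\}$, $m=4$ reducing to the Paley case $\nu=1$).
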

\begin{proof}
Let $\chi$ be a multiplicative character of $\F_p$ of order $m$ and write
\[
K(\chi^j)=\frac{2}{m}\sum_{s\in S}\overline{\chi^j(\omega^s)}.
\]
From Lemma~\ref{lem:char_val_residue_poly} we find that 
\begin{equation}
f(e^{2\pi ik/p})=\sum_{j=1}^{m-1}G(\chi^j)K(\chi^j)\overline{\chi^j(k)}-1.   \label{eqn:f_pth_root}
\end{equation}
Hence $L_f(a,b,c)$ equals
\begin{multline}
\frac{1}{p^3}\sum_{j_1,j_2,j_3,j_4=1}^{m-1}G(\chi^{j_1})G(\chi^{j_2})\overline{G(\chi^{j_3})G(\chi^{j_4})}\,K(\chi^{j_1})K(\chi^{j_2})\overline{K(\chi^{j_3})K(\chi^{j_4})}\\
\times\sum_{k\in\F_p}\overline{\chi^{j_1}(k)\chi^{j_2}(k+a)}\chi^{j_3}(k+b)\chi^{j_4}(k+c)+\Delta,   \label{eqn:L_residue_poly}
\end{multline}
where $\abs{\Delta}\le 15(m-1)^4p^{-1/2}$, using that the magnitude of the sum on the right-hand side of~\eqref{eqn:f_pth_root} is at most $(m-1)p^{1/2}$ by Lemma~\ref{lem:gauss}. First consider the case that $b=0$ and $c=a\ne 0$, so that $I_p(a,b,c)=1$ and $J_p(a,b,c)=0$. Then the inner sum in~\eqref{eqn:L_residue_poly} is
\[
\sum_{k\in\F_p}\chi^{j_3-j_1}(k)\chi^{j_4-j_2}(k+a).
\]
This sum either has magnitude at most $\sqrt{p}$ by the Weil bound (see~\cite[Theorem~5.41]{LidNie1997}, for example) or equals $p$. Since $a\ne 0$, the latter case occurs if and only if $j_1\equiv j_3\pmod m$ and $j_2\equiv j_4\pmod m$. Therefore $L_f(a,0,a)$ equals 
\[
\frac{1}{p^2}\left(\sum_{j=1}^{m-1}\abs{G(\chi^j)}^2\,\abs{K(\chi^j)}^2\right)^2
\]
plus an error term of magnitude at most $16(m-1)^4p^{-1/2}$. Then we find from Lemma~\ref{lem:gauss} and 
\[
\sum_{j=1}^{m-1}\bigabs{K(\chi^j)}^2=1
\]
that the desired result holds for $b=0$ and $c=a\ne 0$. The case $c=0$ and $b=a\ne 0$ is completely analogous.
\par
Now assume that $a=0$ and $c=b\ne 0$, so that $I_p(a,b,c)=0$ and $J_p(a,b,c)=1$. Then the inner sum in~\eqref{eqn:L_residue_poly} equals
\[
\sum_{k\in\F_p}\overline{\chi^{j_1+j_2}(k)}\chi^{j_3+j_4}(k+b).
\]
As before, this sum either has magnitude at most $\sqrt{p}$ or equals~$p$, where the latter case occurs if and only if $j_1\equiv -j_2\pmod m$ and $j_3\equiv -j_4\pmod m$. Hence $L_f(0,b,b)$ equals
\begin{equation}
\frac{1}{p^2}\Biggabs{\sum_{j=1}^{m-1}G(\chi^j)G(\overline{\chi^j})\,K(\chi^j)K(\overline{\chi^j})}^2   \label{eqn:L_residue_poly_aa}
\end{equation}
plus an error term of magnitude at most $16(m-1)^4p^{-1/2}$. From Lemma~\ref{lem:gauss} we find that~\eqref{eqn:L_residue_poly_aa} equals
\[
\Biggabs{\sum_{j=1}^{m-1}\chi^j(-1)\,\abs{K(\chi^j)}^2}^2=\Bigg(\sum_{j=1}^{m-1}(-1)^{\frac{j(p-1)}{m}}\Biggabs{\frac{2}{m}\,\sum_{s\in S} e^{2\pi ijs/m}}^2\Bigg)^2.
\]
A standard calculation then shows that this expression equals $\nu$. This proves the desired result in the case that $a=0$ and $c=b\ne 0$.
\par
Now assume that $0,a,b,c$ do not form two pairs of equal elements. In this case, we invoke the Weil bound again to conclude that the inner sum in~\eqref{eqn:L_residue_poly} is at most $3\sqrt{p}$ in magnitude. Therefore we have by Lemma~\ref{lem:gauss}
\[
\abs{L_f(a,b,c)}\le 18(m-1)^4p^{-1/2},
\]
which completes the proof.
\end{proof}
\par
\begin{proof}[Proof of Theorem~\ref{thm:mf_cyclotomic}]
Without loss of generality, we may choose $1$ as a generator for the additive group of $\F_p$ and take~\eqref{eqn:f_with_g_1} as a characteristic polynomial of $D$ (if the generator is $v$, then replace $D$ by $v^{-1}D$).
\par
We shall deduce Theorem~\ref{thm:mf_cyclotomic} from Theorem~\ref{thm:mf_from_L_1}. Proposition~\ref{pro:L_cyclotomic} takes care of all values of $L_f(a,b,c)$ in the condition of Theorem~\ref{thm:mf_from_L_1}, except when $(a,b,c)=(0,0,0)$. We shall show that our assumption~\eqref{eqn:cond_Ru} takes care of the latter case. Writing
\[
R_u=\sum_{y\in\F_p}\1_D(y)\1_D(y+u),
\]
a standard calculation gives
\[
\abs{f(e^{2\pi ik/p})}^2=\sum_{u\in\F_p}R_u\,e^{-2\pi iku/p}
\]
and therefore, by Parseval's identity,
\[
\frac{1}{p}\sum_{k\in\F_p}\abs{f(e^{2\pi ik/p})}^4=\sum_{u\in\F_p}R_u^2.
\]
A counting argument shows that
\[
R_u=4\,\abs{(D+u)\cap D}-(p-2).
\]
Therefore, since $2\abs{D}=p-1$, we find that  $L_f(0,0,0)$ equals
\[
\frac{1}{p^3}\sum_{k\in\F_p}\abs{f(e^{2\pi ik/p})}^4=
1+\frac{1}{p^2}\sum_{u\in\F_p^*}\big(4\,\abs{(D+u)\cap D}-(p-2)\big)^2.
\]
Now our assumption~\eqref{eqn:cond_Ru} together with Proposition~\ref{pro:L_cyclotomic} imply that the condition of Theorem~\ref{thm:mf_from_L_1} is satisfied, which proves Theorem~\ref{thm:mf_cyclotomic}.
\end{proof}
\par
Next we show how to deduce Corollaries~\ref{cor:fourth_order} and~\ref{cor:sixth_order} from Theorem~\ref{thm:mf_cyclotomic}. First consider Corollary~\ref{cor:fourth_order}. As explained in Section~\ref{sec:results}, we just need to consider the case that $D=C_0\cup C_1$. The cyclotomic numbers of order four have been already determined by Gauss and can be found, for example, in~\cite[Theorem 2.4.1]{BerEvaWil1998}. These numbers depend on the representation $p=x^2+4y^2$ and on the parity of $(p-1)/4$. They also depend on the choice of the primitive element in $\F_p$ used to define the cyclotomic classes, but this is encapsulated in the fact that $y$ is only unique up to sign. Using the cyclotomic numbers of order four, we obtain the numbers $\abs{(D+u)\cap D}$, shown in Table~\ref{tab:cyc_num_4}. For example, if $u\in C_1$, then $u^{-1}\in C_3$ and $\abs{(D+u)\cap D}$ equals
\[
\abs{(C_3+1)\cap C_3}+\abs{(C_3+1)\cap C_0}+\abs{(C_0+1)\cap C_3}+\abs{(C_0+1)\cap C_0}.
\]
From the data in Table~\ref{tab:cyc_num_4} we conclude that the assumption $y^2(\log p)^3/p\to 0$ as $p\to\infty$ in Corollary~\ref{cor:fourth_order} implies the condition~\eqref{eqn:cond_Ru} in Theorem~\ref{thm:mf_cyclotomic}. It is also readily verified that $\nu=1$, which proves Corollary~\ref{cor:fourth_order}.
\begin{table}
\caption{The numbers $4\abs{(D+u)\cap D}-(p-2)$ for $D=C_0\cup C_1$ and primes~$p$ of the form $p=x^2+4y^2$.}
\label{tab:cyc_num_4}
\renewcommand{\arraystretch}{1.2}
\begin{tabular}{c||c|c}
\hline
$D$         & $\frac{p-1}{4}$ even & $\frac{p-1}{4}$ odd\\ \hline\hline
$u\in C_0$  & $-3+2y$ & $-1-2y$\\
$u\in C_1$  & $-3-2y$ & $-1+2y$\\
$u \in C_2$ & $ 1+2y$ & $-1-2y$\\
$u \in C_3$ & $ 1-2y$ & $-1+2y$\\\hline
\end{tabular}
\end{table}
\par
Now consider Corollary~\ref{cor:sixth_order}. Then it suffices to consider the cases that $D$ is one of the following sets
\[
C_0\cup C_1\cup C_2,\quad C_0\cup C_1\cup C_3,\quad C_0\cup C_1\cup C_4.
\]
The cyclotomic numbers of order six have been determined by Dickson~\cite{Dic1935} (see also~\cite{Hal1956} for $(p-1)/6$ odd and~\cite{Whi1960} for $(p-1)/6$ even). These numbers depend on the representation of $p$ as a sum of a square and three times a square (every prime congruent to $1$ modulo~$3$ can be represented in this way) and on the cubic character of~$2$. Since~$p$ is of the form $x^2+27y^2$, we know~\cite[Theorem~2.6.4]{BerEvaWil1998} that $2$ is a cube in $\F_p$. In this case, the numbers $\abs{(D+u)\cap D}$ are given in Tables~\ref{tab:cyc_num_6_odd} and~\ref{tab:cyc_num_6_even} (again $y$ is only unique up to sign, corresponding to different primitive elements in~$\F_p$). We again conclude that the assumption $y^2(\log p)^3/p\to 0$ as $p\to\infty$ in Corollary~\ref{cor:sixth_order} implies the condition~\eqref{eqn:cond_Ru} in Theorem~\ref{thm:mf_cyclotomic}. The proof of Corollary~\ref{cor:sixth_order} is completed by checking that $\nu=1$ if $D$ is of Paley type and $\nu=1/9$ if $D$ is of 
Hall type.
\begin{table}
\caption{The numbers $4\abs{(D+u)\cap D}-(p-2)$ for primes of the form $x^2+27y^2$ and $(p-1)/6$ odd.}
\label{tab:cyc_num_6_odd}
\renewcommand{\arraystretch}{1.2}
\begin{tabular}{c||c|c|c}
\hline
$D$        & $C_0\cup C_1\cup C_2$ & $C_0\cup C_1\cup C_3$ & $C_0\cup C_2\cup C_3$ \\\hline\hline
$u\in C_0$ & $-1+8y$ & $-3+2y$ & $-3-2y$\\
$u\in C_1$ & $-1$    & $-1$    & $1+2y$ \\
$u\in C_2$ & $-1-8y$ & $1-2y$  & $-1$   \\
$u\in C_3$ & $-1+8y$ & $-3+2y$ & $-3-2y$\\
$u\in C_4$ & $-1$    & $-1$    & $1+2y$ \\
$u\in C_5$ & $-1-8y$ & $1-2y$  & $-1$   \\\hline
\end{tabular}
\end{table}
\begin{table}
\caption{The numbers $4\abs{(D+u)\cap D}-(p-2)$ for primes of the form $x^2+27y^2$ and $(p-1)/6$ even.}
\label{tab:cyc_num_6_even}
\renewcommand{\arraystretch}{1.2}
\begin{tabular}{c||c|c|c}
\hline
$D$        & $C_0\cup C_1\cup C_2$ & $C_0\cup C_1\cup C_3$ & $C_0\cup C_2\cup C_3$ \\\hline\hline
$u\in C_0$ & $-3+8y$ & $-3+6y$ & $-3+2y$\\
$u\in C_1$ & $-3$    & $-3-4y$ & $1-2y$\\
$u\in C_2$ & $-3-8y$ & $1+2y$  & $-3+4y$\\
$u\in C_3$ & $1+8y$  & $-3-2y$ & $-3-6y$\\
$u\in C_4$ & $1$     & $1+4y$  & $1+6y$\\
$u\in C_5$ & $1-8y$  & $1-6y$  & $1-4y$\\\hline
\end{tabular}
\end{table}




\providecommand{\bysame}{\leavevmode\hbox to3em{\hrulefill}\thinspace}
\providecommand{\MR}{\relax\ifhmode\unskip\space\fi MR }
\providecommand{\MRhref}[2]{%
  \href{http://www.ams.org/mathscinet-getitem?mr=#1}{#2}
}
\providecommand{\href}[2]{#2}

\end{document}